\DeclareRobustCommand{\SkipTocEntry}[4]{}
\newtheorem{theorem}{Theorem}[section]
\newtheorem{lemma}[theorem]{Lemma}
\newtheorem{proposition}[theorem]{Proposition}
\newtheorem{remark}[theorem]{Remark}
\newtheorem{definition}[theorem]{Definition}
\numberwithin{equation}{section}
\title{Distributional Solutions of the Damped Wave Equation}
\author{Marc Nualart}
\address{Departament de Matem\`{a}tiques,
Universitat Polit\`{e}cnica de Catalunya
C. Pau Gargallo, 14
08028 Barcelona}
\email{marc.nualart@upc.edu}
\date{March 13, 2020}
\keywords{Partial Differential Equations, Damped Wave Equation, Distributional Solution, General Initial Value Problem}
\subjclass[2010]{35A08, 35L05, 35Q91.}
\begin{document}


\begin{abstract}
This work presents results on solutions of the one-dimensional damped wave equation, also called telegrapher's equation, when the initial conditions are general distributions, not only functions. We make a complete deduction of its fundamental solutions, both for positive and negative times. To obtain them we use only self-similarity arguments and distributional calculus, making no use of Fourier or Laplace transforms.

We next use these fundamental solutions to prove both the existence and the uniqueness of solutions to the distributional initial value problem. As applications we recover the semigroup property for initial data in classical function spaces and also the probability distribution function for a recent financial model of evolution of prices.
\end{abstract}
\maketitle
\section{Introduction}

The one dimensional damped wave equation $u_{tt}+ku_t=c^2u_{xx}$ for $k,c>0$ has been vastly studied (see \cite{G&L, Duf} and \cite{Ock}, for example) and related to several important phenomena. These are, for example, the mechanical oscillations of a \textit{string with friction} \cite{Salsa}, the four examples considered in Section 2 of \cite{MW}, and, of more interest in the last decade, the \textit{persistent motion} in Movement Ecology \cite{MCB, Rossetto} and the probability density function for a price evolution model of a \textit{financial asset} \cite{Kol}.

The equation has been considered either in the whole real line or in an interval with boundary conditions. In both cases, many properties of its solutions, particularly of their decay on time, are known for initial conditions $u(x,0)=f(x)$ and $u_t(x,0)=g(x)$ belonging to different classes of functions, including distributions. In particular, in \cite{MW}, the attention is focussed to the case of the whole real line and $f(x)=\delta(x)$, the Dirac's delta function and $g(x)=0$, obtaining 
\begin{equation*}
u(x,t)=\frac{1}{2}e^{-\frac{k}{2}t}\left\lbrace \delta(x-ct)+\delta(x+ct)\right\rbrace 
+k\frac{e^{-\frac{k}{2}t}}{8c} \left[ I_0(\xi)+k\frac{I_1(\xi)}{2\xi}\right]H(ct-|x|),
\end{equation*}
where $I_0$, $I_1$ are Bessel functions, $H(x)$ is the Heaviside function and $$\xi=\frac{\sqrt{c^2t^2-x^2}}{2ct}.$$
This is a solution to be compared, in the context of random walks, to the Gaussian solution of the Diffusion Equation $u_t=Du_{xx}$ with $u(x,0)=\delta(x)$, namely $$u(x,t)=\dfrac{1}{\sqrt{4\pi Dt}} \exp{\left(-\dfrac{x^2}{4Dt}\right)}.$$
And in \cite{Kol}, the attention is focussed in the case $f(x)=\delta(x)$ and $g(x)=-c\delta'(x)$, a case that will be considered in detail in our Section 6 below.

In the whole real line setting, the usual methods often include Fourier and Laplace transforms. In fact, this is the approach of J. Masoliver in (\cite{M}, Appendix A), where the solution to the case $f(x)=\delta(x)$ and $g(x)=0$ is obtained with the use of the Laplace-Fourier Transform. In the same reference it is stressed that "\emph{although the solution has been known since a very long time ago, its derivation has remained quite obscure in the literature}".

In the present paper we will solve the equation in the whole real line both for positive and negative times, a case that has not always been studied in detail, when the initial conditions are general distributions. Furthermore, we will prove that this solution is unique, a result that we have not been able to find explicitly in the literature for general initial conditions. This clearly differs from the non-uniqueness phenomena that happen for the one-dimensional heat equation in the real line when one accepts solutions that can grow very fast near infinity.

Our results will be based on deducing first the fundamental solutions for the equations. We emphasize that we will do this without any use of Fourier or Laplace transforms, but only with self-similar arguments and distributional calculus. This will allow us not to be forced to restrict ourselves to only tempered distributions. We believe that this procedure is more complete and clear than in the previous literature.

The initial-value problem is
\begin{equation}\label{main,u}
\begin{array}{l}\left\{\begin{array}{cc}u_{tt}+ku_t=c^2u_{xx}\\u(0)=f, \quad u_t(0)=g&\end{array}\right.\\\end{array}
\end{equation}
which is equivalent to
\begin{equation}\label{main,v}
\begin{array}{l}\left\{\begin{array}{cc}v_{tt}=c^2v_{xx}+\frac{k}{4}v\\v(0)=f, \quad v_t(0)=g+\frac{k}{2}f&\end{array}\right.\\\end{array}
\end{equation}
for $v=e^{\frac{k}{2}t}u$.

In the following results, the meaning of solution is always in $\mathcal{D}'(\mathbb{R}\times\mathbb{R})$, but we will prove that the solutions belong to the space ${\mathcal C}^1(\mathbb{R},\mathcal{D}'(\mathbb{R}))$ of ${\mathcal C}^1$ functions on time with values on the space of distributions in $x\in\mathbb{R}$. This gives us a stronger regularity for the solution than just the $\mathcal{D}'(\mathbb{R}\times\mathbb{R})$ meaning.
\begin{theorem}\label{exs,psi}
Let us consider the function
\begin{equation*}
\psi(x,t)=sgn(t)\frac1{2c}I_0\left(2\alpha\sqrt{c^2t^2-x^2}\right){\mathcal X}_{\lbrack-c|t|,c|t|\rbrack}(x),
\end{equation*}
with $\alpha=\frac{k}{4c}$, $I_0$ the modified Bessel function of first kind and parameter 0, $sgn(t)$ the sign function, and ${\mathcal X}_\Omega$ the characteristic function of $\Omega$. \\
Then, $\psi(x,t)$ belongs to ${\mathcal C}^1(\mathbb{R},\mathcal{D}'(\mathbb{R}))$ and solves \eqref{main,v} in the sense of distributions when $f=0$ and $g=\delta$ the Dirac delta centered at $x=0$. Furthermore, its time-derivative turns out to be
\begin{equation*}
\psi_t(\cdot,t)=\frac12\left[\delta(\cdot-ct)+\delta(\cdot+ct)\right]+\alpha c|t|\frac{I_0'\left(2\alpha\sqrt{c^2t^2-\cdot^2}\right)}{\sqrt{c^2t^2-\cdot^2}}{\mathcal X}_{\left[-c|t|,c|t|\right]}(\cdot),
\end{equation*}
which is not a function but a distribution.
\end{theorem}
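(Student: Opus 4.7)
The plan is to verify all three assertions—membership in $\mathcal{C}^1(\mathbb{R},\mathcal{D}'(\mathbb{R}))$, the identity for $\psi_t$, and the fact that $\psi$ solves \eqref{main,v}—by direct computation against test functions, exploiting the symmetry $\psi(x,-t) = -\psi(x,t)$ built in through $\operatorname{sgn}(t)$ to reduce to the case $t > 0$ and then invoking continuity at $t = 0$. For fixed $\phi \in \mathcal{D}(\mathbb{R})$ I would analyse
\[
\Psi_\phi(t) := \langle \psi(\cdot, t), \phi\rangle = \frac{\operatorname{sgn}(t)}{2c}\int_{-c|t|}^{c|t|} I_0\bigl(2\alpha\sqrt{c^2 t^2 - x^2}\bigr)\phi(x)\,dx.
\]
By the Leibniz rule $\Psi_\phi$ is $C^1$ for $t \neq 0$, and its derivative splits into a boundary piece from the moving endpoints $\pm c|t|$ and an interior piece from differentiation of the integrand. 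Because $I_0(0) = 1$, the boundary piece equals $\tfrac{1}{2}[\phi(ct) + \phi(-ct)]$, which is the pair of Dirac deltas in the claimed formula. Using $\partial_t \sqrt{c^2 t^2 - x^2} = c^2 t/\sqrt{c^2 t^2 - x^2}$, the interior piece becomes $\alpha c|t|\int I_0'(2\alpha r)\,\phi(x)/r\,dx$, with $r = \sqrt{c^2 t^2 - x^2}$; the integrand is locally bounded because $I_0'(z) = z/2 + O(z^3)$ makes $I_0'(2\alpha r)/r$ finite on the characteristic boundary. Both pieces extend continuously through $t = 0$, giving $\psi \in \mathcal{C}^1(\mathbb{R},\mathcal{D}'(\mathbb{R}))$ and the asserted formula for $\psi_t$.

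Next I would verify the partial differential equation. In the open interior $\{|x| < c|t|\}$ a direct computation using $\partial_t r = c^2 t/r$, $\partial_x r = -x/r$ and the modified Bessel identity $I_0''(z) + I_0'(z)/z = I_0(z)$ applied at $z = 2\alpha r$ yields the pointwise equality
\[
(\partial_t^2 - c^2\partial_x^2)\,I_0(2\alpha r) = 4\alpha^2 c^2 I_0(2\alpha r),
\]
and $4\alpha^2 c^2 = k^2/4$ produces the coefficient appearing in \eqref{main,v}. To promote this to a distributional statement, I would show that $\Psi_\phi''(t) = \langle \psi(\cdot, t),\, c^2\phi'' + \tfrac{k^2}{4}\phi\rangle$ for every $\phi$. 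Expanding the right hand side by integrating by parts twice in $x$ produces boundary evaluations at $x = \pm c|t|$; using $I_0(0) = 1$ and the limit $I_0'(2\alpha r)/r \to \alpha$ at the characteristic, these coincide with the boundary contributions that appear when one differentiates $\Psi_\phi'$ once more in $t$, and the remaining interior integrals then agree by the pointwise identity just derived.

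The initial conditions follow immediately from the formulas: $\Psi_\phi(t) = O(|t|)\,\|\phi\|_\infty$ gives $\psi(\cdot, 0) = 0$ in $\mathcal{D}'(\mathbb{R})$, and the expression for $\psi_t$ tends to $\delta$ as $t \to 0$ because the pair of deltas collapses to a single one while the interior integral vanishes with $|t|$. The main obstacle throughout is the boundary bookkeeping on the light cone $\{|x| = c|t|\}$: each distributional derivative of $\psi$ across that set produces Dirac concentrations, and two derivatives generate combinations of $\delta$ and $\delta'$ masses supported on the characteristic lines. The cancellation of these singular terms in the wave operator $\partial_t^2 - c^2\partial_x^2$ is precisely what makes the self-similar Bessel ansatz work; it depends on the specific values $I_0(0) = 1$, $I_0'(0) = 0$ and the asymptotic $I_0'(z)/z \to 1/2$ at $z = 0$, and is the most delicate computation of the proof.
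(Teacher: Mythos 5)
Your treatment of the $\mathcal{C}^1$ regularity and of the formula for $\psi_t$ is essentially the paper's: Leibniz differentiation of the integral with moving endpoints $\pm c|t|$, with $I_0(0)=1$ producing the two Dirac deltas and the interior term producing the $I_0'(2\alpha r)/r$ part. Where you genuinely diverge is in the verification of the PDE. The paper works directly in $\mathcal{D}'(\mathbb{R}\times\mathbb{R})$: it pairs $\psi$ with $L^*(\varphi)$ for a two-variable test function, splits the integral over the forward and backward light-cone triangles, applies the divergence theorem twice, and kills all boundary terms by two observations --- the jump contributions at $(0,0^{+})$ and $(0,0^{-})$ cancel because $\psi$ is odd in $t$, and $\psi\equiv\pm\frac{1}{2c}$ is constant along the characteristics so its tangential derivative vanishes there. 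You instead verify the equation as a second-order ODE in $t$ with values in $\mathcal{D}'(\mathbb{R})$, namely $\Psi_\phi''(t)=\langle\psi(\cdot,t),\,c^2\phi''+\tfrac{k^2}{4}\phi\rangle$, integrating by parts only in $x$; the boundary evaluations at $x=\pm c|t|$ (using $I_0(0)=1$ and $I_0'(z)/z\to\tfrac12$) do match the terms produced by the second $t$-derivative, and the remaining interior integrals agree by the Bessel identity, so the computation closes. Your route yields a stronger, pointwise-in-$t$ conclusion, but it requires two things the paper's route does not: (i) $\Psi_\phi$ must be shown twice differentiable \emph{including at} $t=0$, where the factors $\operatorname{sgn}(t)$ and $|t|$ enter --- it is precisely the odd extension to $t<0$ that makes the one-sided second derivatives agree (the analogue of the paper's cancellation of the $(0,0^{\pm})$ terms), so you should carry out this check rather than only ``reduce to $t>0$ and invoke continuity''; and (ii) a short additional integration by parts in $t$ against a two-variable test function to convert the $t$-pointwise identity into the $\mathcal{D}'(\mathbb{R}\times\mathbb{R})$ statement the theorem actually asserts. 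Neither is a serious obstacle, but both belong in the written proof.
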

This solution $\psi$ will be called the fundamental solution of the problem. From $\psi$ and $\psi_t$ we are able to obtain the solutions when $f$ and $g$ are general distributions, as the next result shows.

\begin{theorem}\label{exs,f,g}
Given $f,g\in\mathcal{D}'(\mathbb{R})$ distributions, the solution $v\in{\mathcal C}^1(\mathbb{R},\mathcal{D}'(\mathbb{R}))$ to the initial value problem \eqref{main,v} in the sense of distributions is given by
\begin{equation*}
\left\langle v,\varphi\right\rangle := \left\langle f, \psi_t \ast\varphi \right\rangle +\left\langle g+\frac{k}{2}f,\psi\ast\varphi \right\rangle.
\end{equation*}
\end{theorem}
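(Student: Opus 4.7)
The plan is to exploit the superposition principle. Because $\psi$ is the fundamental solution of \eqref{main,v} for data $(0,\delta)$, convolving it in $x$ with any distribution $h$ will produce a solution with data $(0,h)$; the analogous statement for $\psi_t$, which solves the equation with data $(\delta,0)$ (as one reads directly from Theorem \ref{exs,psi}), yields solutions with a prescribed initial position. Summing the two contributions then realizes the required initial data $(f,g+\tfrac{k}{2}f)$ for \eqref{main,v}.

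To make this rigorous, I would first set
$$v(\cdot,t):=\psi_t(\cdot,t)\ast_x f+\psi(\cdot,t)\ast_x\bigl(g+\tfrac{k}{2}f\bigr)$$
and argue that each convolution is a well-defined distribution in $x$: for every fixed $t$ the kernels $\psi(\cdot,t)$ and $\psi_t(\cdot,t)$ are supported in the compact set $[-c|t|,c|t|]$, so the standard theory permits convolution with an arbitrary $h\in\mathcal{D}'(\mathbb{R})$. A short Fubini-type computation, using that $\psi(\cdot,t)$ and $\psi_t(\cdot,t)$ are even in $x$, then shows that pairing this $v$ against $\varphi\in\mathcal{D}(\mathbb{R}^2)$ reproduces the formula stated in the theorem.

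I would next establish the $\mathcal{C}^1(\mathbb{R};\mathcal{D}'(\mathbb{R}))$ regularity and verify the PDE. Regularity follows because Theorem \ref{exs,psi} provides $\psi$ and $\psi_t$ as continuous curves into $\mathcal{D}'(\mathbb{R})$ with the appropriate differentiability in $t$, and spatial convolution with a fixed distribution is a continuous linear operation on $\mathcal{D}'(\mathbb{R})$. For the equation itself, since $\partial_t$ and $\partial_x$ commute with $\ast_x$, applying the operator $\partial_{tt}-c^2\partial_{xx}-\tfrac{k^2}{4}$ to $v$ reduces to applying it to $\psi$ and $\psi_t$; the first vanishes by Theorem \ref{exs,psi} and the second by differentiating that PDE once in $t$.

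The initial data is then checked at $t=0$. Theorem \ref{exs,psi} gives $\psi(\cdot,0)=0$ and $\psi_t(\cdot,0)=\delta$, and the PDE together with $\psi(\cdot,0)=\psi_{xx}(\cdot,0)=0$ forces $\psi_{tt}(\cdot,0)=0$. Thus $v(\cdot,0)=\delta\ast f+0=f$ and $v_t(\cdot,0)=0\ast f+\delta\ast(g+\tfrac{k}{2}f)=g+\tfrac{k}{2}f$, as required. I expect the main obstacle to be the careful justification of passing $\partial_t$ inside the convolution $\ast_x$ when the second factor is an arbitrary distribution; this can be reduced, by testing against a fixed $\varphi\in\mathcal{D}(\mathbb{R})$, to the scalar $\mathcal{C}^1$ regularity of $t\mapsto\langle\psi(\cdot,t),\varphi\rangle$ already furnished by Theorem \ref{exs,psi}.
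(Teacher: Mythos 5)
Your proposal is correct and follows essentially the same route as the paper: there, too, the data are split into the two sub-problems $(0,h)$ and $(h,0)$, the first solved by convolving $h$ with $\psi$ (Proposition \ref{rfd,ex,v,g}), the second by convolving with $\psi_t$ (Lemma \ref{cpr,lema}), and the two are summed. Your additional remarks --- the evenness of the kernels in $x$ and the deduction of $\psi_{tt}(\cdot,0)=0$ from the PDE and $\psi(\cdot,0)=0$ --- are precisely the justifications the paper leaves implicit.
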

This solution is unique as stated next.
\begin{theorem}\label{uniq,f,g,v}
Let $f,g\in\mathcal{D}'(\mathbb{R})$. Then, the distribution $v$ given above in Theorem \ref{exs,f,g} is the unique solution in $\mathcal{C}^1(\mathbb{R},\mathcal{D}'(\mathbb{R}))$ of problem \eqref{main,v}.
\end{theorem}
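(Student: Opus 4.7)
The plan is to reduce by linearity to proving that any $v\in\mathcal{C}^1(\mathbb{R},\mathcal{D}'(\mathbb{R}))$ satisfying $Lv:=v_{tt}-c^2v_{xx}-\tfrac{k^2}{4}v=0$ with vanishing initial data $v(\cdot,0)=v_t(\cdot,0)=0$ must be identically zero. I would start by bootstrapping the time regularity of $v$: the identity $v_{tt}=c^2v_{xx}+\tfrac{k^2}{4}v$ expresses $v_{tt}$ as a $\mathcal{C}^1$ function of $t$ with values in $\mathcal{D}'(\mathbb{R})$, so $v\in\mathcal{C}^3$ in $t$, and iterating gives $v\in\mathcal{C}^\infty(\mathbb{R},\mathcal{D}'(\mathbb{R}))$. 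Since $v$ is continuous in $t$, it then suffices to show $\langle v,\varphi\rangle=0$ for every $\varphi\in\mathcal{D}(\mathbb{R}\times(0,\infty))$; the $t<0$ case follows by the symmetry $t\mapsto -t$, which preserves \eqref{main,v}.

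The core step is a duality argument. Fix such $\varphi$ with time support in $(0,T)$ and spatial support in $[-R,R]$. I would construct $u\in\mathcal{C}^\infty(\mathbb{R}\times[0,T])$ solving the backward Cauchy problem $Lu=\varphi$, $u(\cdot,T)=0$, $u_t(\cdot,T)=0$ via the Duhamel formula
\[
u(x,t):=\int_t^{T}\bigl(\psi(\cdot,s-t)\ast_x\varphi(\cdot,s)\bigr)(x)\,ds,
\]
where $\psi$ is the fundamental solution from Theorem~\ref{exs,psi}. That $Lu=\varphi$ together with the terminal conditions can be checked by differentiating under the integral, using $\psi(\cdot,0)=0$, $\psi_t(\cdot,0)=\delta$, and $L\psi=0$ for $s-t>0$. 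Rewriting $u$ as the $\mathbb{R}^2$-convolution of $\varphi\in\mathcal{D}(\mathbb{R}^2)$ with the time-reflected forward part of $\psi$ shows $u$ is smooth (convolution of a distribution with a $\mathcal{D}(\mathbb{R}^2)$ function is $\mathcal{C}^\infty$), while the light-cone support of $\psi$ encoded by the factor $\mathcal{X}_{[-c|t|,c|t|]}$ yields $\mathrm{supp}\,u(\cdot,t)\subset[-R-cT,R+cT]$ for every $t\in[0,T]$. Thus $u(\cdot,t)$ is a genuine test function in $x$ at every time and is admissible to be paired against $v$.

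Finally, I would compute $\langle v,\varphi\rangle=\int_0^T\langle v(\cdot,t),(Lu)(\cdot,t)\rangle\,dt$ and transfer derivatives onto $v$ by integration by parts: spatially $\langle v,u_{xx}\rangle=\langle v_{xx},u\rangle$ (legitimate by the compact $x$-support of $u$), and temporally two successive integrations by parts turn $\int_0^T\langle v,u_{tt}\rangle\,dt$ into $\int_0^T\langle v_{tt},u\rangle\,dt$ plus boundary terms. The boundary terms at $t=0$ vanish from $v(\cdot,0)=v_t(\cdot,0)=0$ and those at $t=T$ from $u(\cdot,T)=u_t(\cdot,T)=0$. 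Substituting $v_{tt}=c^2v_{xx}+\tfrac{k^2}{4}v$ from the PDE makes the remaining bulk terms cancel pairwise, giving $\langle v,\varphi\rangle=0$ and hence $v\equiv 0$. The main obstacle in this plan is the construction of the smooth, spatially compactly supported backward solution $u$: because $\psi$ is only continuous with jumps in its derivatives along the characteristics, justifying the smoothness of $u$ requires the convolution-on-$\mathbb{R}^2$ reinterpretation above, after which the remaining integration by parts in the distribution-valued setting is delicate but routine once the $\mathcal{C}^\infty$-in-$t$ regularity of $v$ has been bootstrapped.
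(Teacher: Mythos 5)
Your argument is correct in outline, but it takes a genuinely different route from the paper's. The paper never sets up an adjoint problem: it mollifies the solution in the space variable only, forming $s(x,t)=\langle v(\cdot,t),\varphi(x-\cdot)\rangle$ for $\varphi\in\mathcal{D}(\mathbb{R})$ (Proposition \ref{rfr,prop,s,g}), so that the difference of two distributional solutions becomes a continuous function solving the equation with zero data; uniqueness for that reduced problem is then proved by a contraction argument, writing $u(x,t)=\tfrac{1}{2c}\int_0^t\int_{x-c(t-s)}^{x+c(t-s)}\tfrac{k^2}{4}\,u\,dy\,ds$ via the D'Alembert/Duhamel representation and observing that on a time step $T=2/k$ the sup over a characteristic triangle contracts by the factor $\tfrac12$, whence $u\equiv0$ there, iterating in $t$. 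Your duality scheme instead solves the backward source problem $Lu=\varphi$ with the fundamental solution and pairs it against $v$. Both work, with complementary costs. The paper's route needs only $L\psi=0$ and the traces of $\psi$ at $t=0$, confining the analysis to a scalar fixed-point estimate, at the price of justifying the Duhamel representation for an a priori merely continuous solution (a point the paper itself treats rather quickly). Your route is the standard adjoint argument and reuses $\psi$ more directly, but it needs three ingredients the paper avoids: (i) the bootstrap lemma identifying the two-dimensional distributional $v_{tt}$ with the pointwise-in-$t$ second derivative of a map into $\mathcal{D}'(\mathbb{R})$ (only $\mathcal{C}^2$ in $t$ is actually needed); (ii) the identity $L\psi_+=\delta(x)\otimes\delta(t)$ for the \emph{forward half} of $\psi$ on $\mathbb{R}^2$ --- note that $L\psi=0$ alone does not yield $Lu=\varphi$, and literal ``differentiation under the integral'' twice in $t$ is unavailable because $\psi_{tt}$ carries derivatives of deltas on the characteristics; your $\mathbb{R}^2$-convolution reinterpretation is the right fix, and the needed identity does drop out of the divergence-theorem computation in the proof of Theorem \ref{exs,psi} restricted to the upper triangle, where the corner term $2c(\psi\phi)(0,0^+)=\phi(0,0)$ is exactly the point source; and (iii) the product rule $\tfrac{d}{dt}\langle v(\cdot,t),u(\cdot,t)\rangle=\langle v_t,u\rangle+\langle v,u_t\rangle$ for a smooth $\mathcal{D}(\mathbb{R})$-valued curve $u(\cdot,t)$ with uniformly compact support, which is where equicontinuity (Banach--Steinhaus) enters. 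None of these is a genuine gap --- you flag the key difficulty yourself --- but they would all have to be written out for your proof to stand on its own.
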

These results allow us to solve also problem (\ref{main,u}) uniquely as stated in the following:
\begin{remark}\label{exs,uniq,f,g,u}
Let $f,g\in\mathcal{D}'(\mathbb{R})$. The distribution $u$ given by
\begin{equation*}
\begin{split}
\left\langle u,\varphi\right\rangle &:= e^{-\frac{k}{2}t}\left\lbrace\left \langle f,\left( \psi_t+\frac{k}{2}\psi\right) \ast\varphi \right\rangle +\left\langle g,\psi\ast\varphi \right\rangle \right\rbrace \\
&= e^{-\frac{k}{2}t}\left\lbrace \left\langle f,\psi_t\ast\varphi \right\rangle +\left\langle \frac{k}{2}f+ g,\psi\ast\varphi \right\rangle \right\rbrace
\end{split}
\end{equation*}
is the unique solution in $\mathcal{C}^1(\mathbb{R},\mathcal{D}'(\mathbb{R}))$ of problem \eqref{main,u}.
\end{remark}

The structure of the paper is as follows. In Section \ref{heuristics} we deduce heuristically a possible solution to problem \ref{main,v} for $f=0$ and $g=\delta$. In Section \ref{rf}, inspired by the heuristics, we prove rigorously Theorem \ref{exs,psi}. Then, in Section \ref{rfd} we prove Theorems \ref{exs,f,g} and \ref{uniq,f,g,v}.

Finally, in Section \ref{sg} we give some properties of the semigroup that generates the solution when the initial conditions $(f,g)\in H^1(\mathbb{R})\times L^2(\mathbb{R})$ and in Section \ref{fm} we apply Remark \ref{exs,uniq,f,g,u} to the price evolution model of a financial asset proposed in \cite{Kol}.

While many results are thoroughly proved here, for the sake of simplicity the complete calculations of some proofs have been ommitted but can be found in the author's Bachelor Degree Thesis \cite{Nua} which was advised by Joan Sol\`{a}-Morales.

\section{Heuristics}\label{heuristics}
Inspired by \cite{G&L}, we introduce the characteristic coordinates for the 1-D wave equation, which are $\zeta = ct-x$ and $\eta = ct + x$. If $\psi(x,t)$ solves $\psi_{tt}-c^2\psi_{xx}=\frac{k^2}{4}\psi$ then $v(\zeta,\eta)=\psi(x,t)$ solves  $v_{\zeta \eta}=\frac{k^2}{16c^2}v$. At this point \cite{Ock} uses Fourier Transforms, although it also mentions the possibility of proceeding in other ways.

We notice that for all $a\neq 0$, the function $w(\zeta,\eta)=v\left (a\zeta,\frac{1}{a}\eta \right )$ also solves $w_{\zeta \eta}=\frac{k^2}{16c^2}w$, therefore we have a family of solutions that depends on the parameter $a$, the equation is invariant under the transformation $(\zeta,\eta)\rightarrow \left( a\zeta, \frac{1}{a} \eta \right )$.
For this reasons, we look for solutions invariant under this kind of transformations, for example we look for solutions of the form $\psi(x,t)=v(\zeta,\eta)=f(\zeta\eta)=f(c^2t^2-x^2)=f(\lambda)=h(2\alpha\sqrt{\lambda})=h(\xi)$ for some $f$ and $h$ to be found, where we write $\lambda=c^2t^2-x^2$ and $\xi=2\alpha\sqrt{\lambda}$. This way,
\begin{equation*}
0=\psi_{tt}-c^2\psi_{xx}-\frac{k^2}4\psi
=4c^2\left[ \lambda f''+f'-\alpha^2f\right] 
=\frac{c^2}{\lambda}\left[\xi^2h''(\xi)+\xi h'(\xi)-\xi^2h(\xi)\right].
\end{equation*}
The solution on $h$ is a linear combination of the modified Bessel functions of order $n=0$,
\begin{equation*}
h(\xi)=AI_0(\xi)+BK_0(\xi)
\end{equation*}
when $\lambda\geq0$. Otherwise, we extend it by 0 and we write it by
\begin{equation*}
\psi(x,t)=\left[ AI_0(2\alpha\sqrt\lambda)+BK_0(2\alpha\sqrt\lambda)\right]\mathcal{X}_{[-c|t|, c|t|]}(x).
\end{equation*}
Let us now consider $g(x)$ a function at least continuous. If we want $v_t(x,0)=g(x)$, let us take
\begin{equation*}
v(x,t)=(g\ast\psi)(x)(t)=\int_{x-ct}^{x+ct}\left[AI_0(2\alpha\sqrt\lambda)+BK_0(2\alpha\sqrt\lambda)\right]g(y) dy,
\end{equation*}
with $A$ and $B$ still to be determined. We compute
\begin{equation*}
\begin{split}
v_t(x,t)&=c\left[AI_0(2\alpha\sqrt0)+BK_0(2\alpha\sqrt0)\right]\left[g(x+ct)+g(x-ct)\right] \\
&\quad+\int_{x-ct}^{x+ct}\left[AI_0'(2\alpha\sqrt\lambda)+BK_0'(2\alpha\sqrt\lambda)\right]\frac{2\alpha c^2t}{\sqrt\lambda}g(y) dy
\end{split}
\end{equation*}
since $\lambda=0$ when $y=x\pm ct$. We also have that $I_0(0)=1$ and $K_0(z)\rightarrow \infty$ when $z\rightarrow 0$ so for $v_t$ to exist and be bounded we impose $B=0$. Therefore, it reduces to
\begin{equation*}
v_t(x,t)=cA\left[g(x+ct)+g(x-ct)\right]+\int_{x-ct}^{x+ct}AI_0'(2\alpha\sqrt\lambda)\frac{2\alpha c^2t}{\sqrt\lambda}g(y) dy.
\end{equation*}
We have $\lim_{z\rightarrow0}\frac{I_0'(2\alpha z)}z=\alpha$. It is then clear that
\begin{equation*}
\lim_{t\rightarrow0}\int_{x-ct}^{x+ct}AI_0'(2\alpha\sqrt\lambda)\frac{2\alpha c^2t}{\sqrt\lambda}g(y) dy=0
\end{equation*}
because the interval of integration reduces only to the point $x$ and the integrand is not only bounded but also tends to 0 as $t$ does so. Hence, we have
$
\lim_{t\rightarrow0}v_t(x,t)=2cAg(x)=g(x)
$
assuming $A=\frac1{2c}$ and $g$ continuous. We have just seen $v(x,t)$ satisfies one initial condition. As for the other, let us use all we have deduced so far. From
\begin{equation*}
v(x,t)=\int_{x-ct}^{x+ct}\frac1{2c}I_0(2\alpha\sqrt\lambda)g(y) dy
\end{equation*}
and using the same argument, when $t\rightarrow0$ the integral reduces to the single point $x$ and the integrand is bounded since $I_0(z)\rightarrow1$ as $z\rightarrow0$ we can easily show that $v(x,0)=0$. 
All we have done is quite heuristic, not very rigorous. However, it has provided us a useful insight into the equation and its properties, as well as a good candidate for the real solution of the problem. Let us now go back and study the fundamental solution of the problem.
\section{Proof of Theorem \ref{exs,psi}}\label{rf}
We must prove that $\psi(x,t)$ belongs to ${\mathcal C}^1(\mathbb{R},\mathcal{D}'(\mathbb{R}))$ and as a two-variables distribution, $\psi(x,t)\in\mathcal{D}'(\mathbb{R}\times\mathbb{R})$ solves (\ref{main,v}) in the sense of distributions when $f=0$ and $g=\delta$ the Dirac delta centered at $x=0$. We first check that $\psi$ solves the differential equation and afterwards we prove that it belongs to ${\mathcal C}^1(\mathbb{R},\mathcal{D}'(\mathbb{R}))$, satisfies the required initial conditions, and we compute its time-derivative.
\begin{proof}[Proof that $\psi$ solves (\ref{main,v}).]
First of all, let us remark that when $x=\pm ct$ we have $\psi(x,t)\equiv sgn(t)\frac{1}{2c}$, notice there is a discontinuity in the straight lines $\left\lbrace x=ct\right\rbrace $ and $\left\lbrace x=-ct \right\rbrace $, the $\textit{characteristics}$ of our equation because of the discontinuities of $\mathcal{X}_{[-c|t|,c|t|]}(x)$. 
Hence, we can't expect our candidate to be a classical solution of the problem. However, let us see it is a solution in the sense of distributions

Let $L$ be the differential operator defined by
\begin{equation*}
L(u)=u_{tt}-c^2u_{xx}-\frac{k^2}4u,
\end{equation*}
the aim of the proof is to show that $\psi(x,t)$ is such that $\left\langle L(\psi),\varphi\right\rangle=0$, for all $\varphi\in{\mathcal C}_0^\infty\left(\mathbb{R}^2\right)$, that is,
\begin{equation}\label{def.sol}
\begin{split}
\left\langle L(\psi),\varphi\right\rangle=
\left\langle \psi,L^*(\varphi)\right\rangle&=\int_{\mathbb{R}^2}\psi L^*(\varphi)dx dt \\
&=\int_{\mathbb{R}^2}\psi\left( \varphi_{tt}-c^2\varphi_{xx}-\frac{k^2}4\varphi \right)dxdt=0,
\end{split}
\end{equation}
by definition of the adjoint of the operator, which in this case it is itself. Note that this last integral is well defined.
 
Let then $\varphi\in\mathcal{C}^{\infty}_0(\mathbb{R}^2)$ be a smooth function with compact support $K$, say included in the rectangle $R=[-a,a]\times[-\frac{a}{c},\frac{a}{c}]\subset\mathbb{R}\times\mathbb{R}$ for some $a>0$. Below there's a picture of the situation. The rectangle represents $R$ and the shaded area the region where $\psi$ is not 0 (See Figure \ref{fig1}).

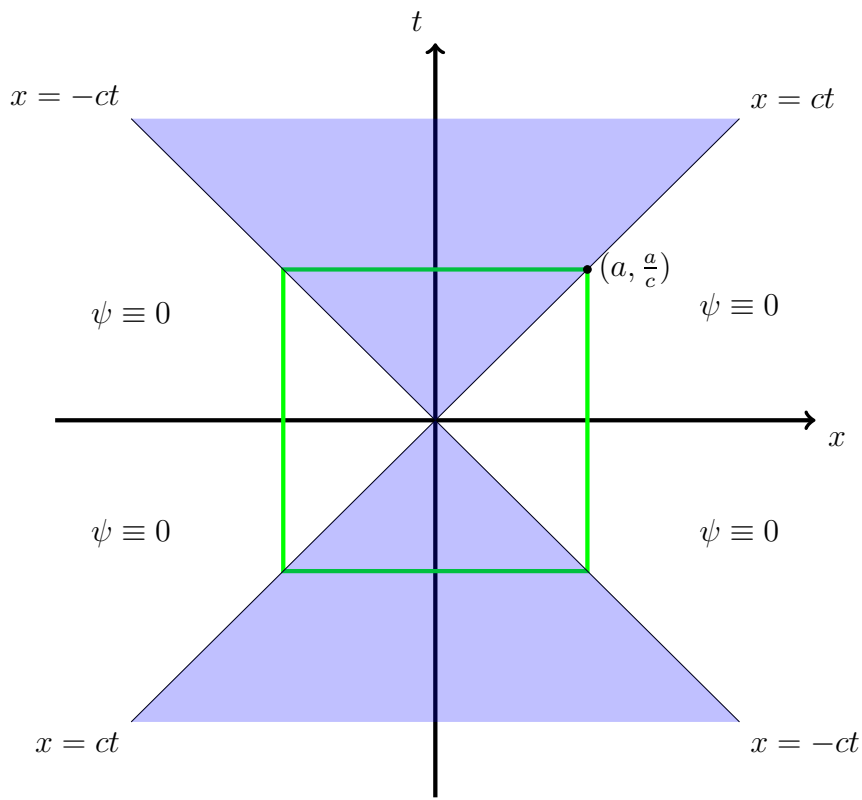
\begin{figure}[h!]
\begin{center}
\begin{tikzpicture}
\draw[ultra thick,->] (-5,0) -- (5,0) node[anchor=north west] {$x$ };
\draw[ultra thick,->] (0,-5) -- (0,5) node[anchor=south east] {$t$ };
\draw[ultra thick, green] (-2,-2) rectangle (2,2);
\draw (-4,-4) -- (4,4);
\draw (-4,4) -- (4,-4);
\fill[fill=blue, opacity=0.25] (0,0) -- (4,4) -- (-4,4) -- cycle;
\fill[fill=blue, opacity=0.25] (0,0) -- (4,-4) -- (-4,-4) -- cycle;
\node at (4,1.5) {$\psi\equiv 0$};
\node at (4,-1.5) {$\psi\equiv 0$};
\node at (-4,1.4) {$\psi\equiv 0$};
\node at (-4,-1.5) {$\psi\equiv 0$};
\node [above right] at (4,4) {$x=ct$};
\node [below left] at (-4,-4) {$x=ct$};
\node [above left] at (-4,4) {$x=-ct$};
\node [below right] at (4,-4) {$x=-ct$};
\draw[fill] (2,2) circle [radius=0.05];
\node [right] at (2,2) {$(a,\frac{a}{c})$};
\end{tikzpicture}
\end{center}
\caption{Scheme of the situation when integrating}
\label{fig1}
\end{figure}

Due to the compact support $K\subset R$ of $\varphi$ and the fact that $\psi$ is identically zero whenever $|x|>c|t|$, we have that the domain of integration of the last integral in \eqref{def.sol} can be reduced to two distinct open triangles inside the rectangle R, which are
\begin{equation*}
\begin{split}
\bigtriangledown&=\left\lbrace (x,t)\in\mathbb{R}^2 \mid 0<t<\frac{a}{c},\;-ct<x<ct\right\rbrace, \\
\bigtriangleup&=\left\lbrace (x,t)\in\mathbb{R}^2 \mid -\frac{a}{c}<t<0,\;ct<x<-ct\right\rbrace.
\end{split}
\end{equation*}
Then,
\begin{equation*}
\int_{\mathbb{R}^2}\psi\left(\varphi_{tt}-c^2\varphi_{xx}\right)=\int_\bigtriangledown \psi\text{div}X+\int_\bigtriangleup \psi\text{div}X,
\end{equation*}
where $X=\begin{pmatrix}-c^2\varphi_x\\\varphi_t\end{pmatrix}$. Let us reason for the integral on $\bigtriangledown$, the argument is the same for the other. The vector field $\psi X$ is continuously differentiable up to the boundary because both $\psi$ and $X$ are so $\textit{inside}$ the triangle. Hence, we can use the divergence theorem. 
\begin{equation*}
\int_\bigtriangledown \psi\text{div}X=\int_\bigtriangledown\text{div}(\psi X)-\int_\bigtriangledown\nabla \psi\cdot X=\int_{\vee}\psi X\cdot n\;d\ell-\int_\bigtriangledown\nabla \psi\cdot X.
\end{equation*}
Notice also that
\begin{equation*}
\begin{split}
\int_\bigtriangledown\nabla \psi\cdot X=\int_\bigtriangledown\begin{pmatrix}\psi_x&\psi_t\end{pmatrix}\cdot\begin{pmatrix}-c^2\varphi_x\\\varphi_t\end{pmatrix} 
=\int_\bigtriangledown Y\cdot\nabla\varphi=\int_\bigtriangledown\text{div}\left(\varphi Y\right)-\int_\bigtriangledown\varphi\text{div}Y,
\end{split}
\end{equation*}
where $Y=\begin{pmatrix}-c^2\psi_x\\\psi_t\end{pmatrix}$. The vector field $\varphi Y$ is continuously differentiable up to the boundary because both $\psi$ and $X$ are so $\textit{inside}$ the triangle. Hence, we can use the divergence theorem again to obtain
\begin{equation*}
\int_\bigtriangledown\nabla \psi\cdot X=\int_{\vee}\varphi Y\cdot n\;d\ell-\int_\bigtriangledown\varphi\text{div}Y.
\end{equation*}
This way we can write 
\begin{equation*}
\int_\bigtriangledown \psi\text{div}X=\int_{\vee}\psi X\cdot n\;d\ell-\int_{\vee}\varphi Y\cdot n\;d\ell+\int_\bigtriangledown\varphi\text{div}Y,
\end{equation*}
and the same expression holds for the integral on $\bigtriangleup$. The boundary terms are integrated on 
\begin{equation*}
\begin{split}
\vee&=\left\lbrace (x,t)\in\mathbb{R}^2 \mid 0<t<\frac{a}{c},\;x=\pm ct\right\rbrace \text{ and } \\
\wedge&=\left\lbrace (x,t)\in\mathbb{R}^2 \mid -\frac{a}{c}<t<0,\;x=\pm ct\right\rbrace.
\end{split}
\end{equation*}
The boundaries of the triangles also contain the lines $[-a,a]\times\left\lbrace \frac{a}{c}\right\rbrace $ and $[-a,a]\times\left\lbrace -\frac{a}{c}\right\rbrace$. However, the integrals on such lines are 0 because they fall outside the compact support of $\psi X$ (due to $\varphi$) and therefore we just integrate on $\vee$ and $\wedge$ taking $n$ the exterior normal unit vector on such sets. See Figure \ref{fig1} to clarify the situation.

We will compute the integrals on $\vee$, the computation on $\wedge$ is similar and can be found in \cite{Nua}. In order to compute $\int_{\vee}\psi X\cdot n\;d\ell$, we divide it according to the sign of $x$ and apply the corresponding values of $n$ in each case. First, we parametrize the line $\sigma(s)=\left(s,\frac {-s}c\right)$ with $s\in[-a,0]$. Then,
\begin{multline*}
\int_{-a}^0\left[ \psi\begin{pmatrix}-c^2\varphi_x\\\varphi_t\end{pmatrix}\cdot\begin{pmatrix}-\frac1c&-1\end{pmatrix}\right] (\sigma(s))ds 
=c\int_{-a}^0\left[ \psi\left(\varphi_x-\frac1c\varphi_t\right)\right] (\sigma(s))ds  \\
=c\int_{-a}^0\left[\psi\frac{d\varphi}{ds}\right](\sigma(s))ds 
=c\left(\left(\psi\varphi\right)(0,0^+)-\int_{-a}^0\left[\varphi\frac{d\psi}{ds}\right]\left(s,\frac {-s}c\right)ds\right).
\end{multline*}
We now parametrize the other part of the set, the line given by $\sigma(s)=\left(s,\frac sc\right)$ with $s\in[0,a]$. Using the same parts integration and directional derivative strategy as before,
\begin{multline*}
\int_0^a\left[ \psi\begin{pmatrix}-c^2\varphi_x\\\varphi_t\end{pmatrix}\cdot\begin{pmatrix}\frac1c&-1\end{pmatrix}\right] \left(\sigma(s)\right)ds 
=c\left(\left(\psi\varphi\right)(0,0^+)+\int_0^a\left[\varphi\frac{d\psi}{ds}\right]\left(s,\frac sc\right)ds\right).
\end{multline*}
Consequently, we have that
\begin{multline*}
\int_{\vee}\psi X\cdot n\;d\ell
=2c(\psi\varphi)(0,0^+) 
+c\int_0^a\left[\varphi\frac{d\psi}{ds}\right]\left(s,\frac sc\right)ds  
-c\int_{-a}^0\left[\varphi\frac{d\psi}{ds}\right]\left(s,\frac {-s}c\right)ds
\end{multline*}
For $\wedge$ a similar calculation yields
\begin{multline*}
\int_{\wedge}\psi X\cdot n\;d\ell
=2c(\psi\varphi	)(0,0^-) 
+c\int_{0}^{a}\left[\varphi\frac{d\psi}{ds}\right]\left(s,-\frac {s}c\right)ds-c\int_{-a}^{0}\left[\varphi\frac{d\psi}{ds}\right]\left(s,\frac {s}c\right)ds.
\end{multline*}
On the other hand, to compute $\int_{\vee}\varphi Y\cdot n\;d\ell$ we parametrize again one subset by $\sigma(s)=\left(s,\frac {-s}c\right)$ with $s\in[-a,0]$. Then,
\begin{multline*}
\int_{-a}^0\left[ \varphi\begin{pmatrix}-c^2\psi_x\\\psi_t\end{pmatrix}\cdot\begin{pmatrix}-\frac1c&-1\end{pmatrix}\right] \left(\sigma(s)\right)ds  
=\int_{-a}^0\left[ \varphi\left(c\psi_x-\psi_t\right)\right] \left(\sigma(s)\right)ds  \\
=c\int_{-a}^{0}\left[ \varphi\left(\psi_x-\frac1c\psi_t\right)\right] \left(\sigma(s)\right)ds 
=c\int_{-a}^0\left[\varphi\frac{d\psi}{ds}\right]\left(s,-\frac sc\right)ds.
\end{multline*}
The other subset is parametrized by $\sigma(s)=\left(s,\frac sc\right)$ with $s\in[0,a]$. Using the same strategy as above, we have
\begin{equation*}
\int_0^a\left[\varphi\begin{pmatrix}-c^2\psi_x\\\psi_t\end{pmatrix}\cdot\begin{pmatrix}\frac1c&-1\end{pmatrix}\right] \left(\sigma(s)\right)ds=-c\int_0^a\left[\varphi\frac{d\psi}{ds}\right]\left(s,\frac sc\right)ds.
\end{equation*}
Therefore,
\begin{equation*}
\int_{\vee}\varphi Y\cdot n\;d\ell=c\int_{-a}^0\left[\varphi\frac{d\psi}{ds}\right]\left(s,-\frac sc\right)ds-c\int_0^a\left[\varphi\frac{d\psi}{ds}\right]\left(s,\frac sc\right)ds
\end{equation*}
and for $\wedge$	 a similar calculation yields
\begin{equation*}
\int_{\wedge}\varphi Y\cdot n\;d\ell=c\int_{-a}^{0}\left[\varphi\frac{d\psi}{ds}\right]\left(s,\frac sc\right)ds-c\int_{0}^{a}\left[\varphi\frac{d\psi}{ds}\right]\left(s,-\frac sc\right)ds.
\end{equation*}
Finally, mixing all things found so far and adding the missing term,
\begin{multline*}
\int_{\mathbb{R}^2}\psi\left(\varphi_{tt}-c^2\varphi_{xx}-\frac{k^2}4\varphi\right) dx dt =2c(\psi\varphi)(0,0^+)+2c(\psi\varphi)(0,0^-) \\
+2c\int_0^a\left[\varphi\frac{d\psi}{ds}\right]\left(s,\frac sc\right)ds -2c\int_{-a}^0\left[\varphi\frac{d\psi}{ds}\right]\left(s,\frac {-s}c\right)ds \\ -2c\int_{-a}^{0}\left[\varphi\frac{d\psi}{ds}\right]\left(s,\frac {s}c\right)ds +2c\int_{0}^{a}\left[\varphi\frac{d\psi}{ds}\right]\left(s,-\frac {s}c\right)ds \\
+\int_{\lbrace\bigtriangledown\cup\bigtriangleup\rbrace}\varphi\left(\psi_{tt}-c^2\psi_{xx}-\frac{k^2}4\psi\right) dx dt.
\end{multline*}
The notation $0^+$ means the limit to zero going from above and $0^-$ is the limit to zero going from below. This distinction is crucial since we have $\psi$ defined differently whereas $t>0$ or not. In our case, we have $\psi(0,0^+)\rightarrow\frac{1}{2c}$ and $\psi(0,0^-)\rightarrow-\frac{1}{2c}$. Since $\varphi\in \mathcal{C}^\infty_0(\mathbb{R}^2)$, we have  $2c(\psi\varphi)(0,0^+)+2c(\psi\varphi)(0,0^-)=0$.

Let us recall now a property of our candidate to solution, which is $\psi$ is either $\frac{1}{2c}$ or $-\frac{1}{2c}$ on the lines $x=\pm ct$. That means $\psi$ is constant on such lines and therefore its directional derivative is 0. We are just saying that
\begin{equation*}
\frac{d\psi}{ds}\left(s,\frac sc\right)=\frac{d\psi}{ds}\left(s,-\frac sc\right)=0,
\end{equation*}
for all $s\in\mathbb{R}$. Hence, the line integrals are all 0 and what remains to be seen is that
\begin{equation*}
\int_{\lbrace\bigtriangledown\cup\bigtriangleup\rbrace}\varphi\left(\psi_{tt}-c^2\psi_{xx}-\frac{k^2}4\psi\right) dx dt=0,
\end{equation*}
which is true provided that $\psi_{tt}-c^2\psi_{xx}-\frac{k^2}4\psi=0$ in the interior of the two triangles. We just check the result for positive times. For negatives times, the minus sign does not affect at all the result.
Let us once again recall what form has our candidate of solution inside the integrating region for positive times. It is
\begin{equation*}
\psi(x,t)=\frac1{2c}I_0\left(2\alpha\sqrt{c^2t^2-x^2}\right).
\end{equation*}
This function is infinitely differentiable in both variables in this region and for comfort let us denote $\lambda=c^2t^2-x^2$. To verify it satisfies the PDE, we use the same reasoning as in the Heuristics Section \ref{heuristics}. That is, we write $\psi(x,t)=\frac{1}{2c}I_0(2\alpha\sqrt{\lambda})=f(\lambda)=g(2\alpha\sqrt{\lambda})=g(\xi)$ for some $f$ and $g$ to determine.
Hence, recalling $\alpha=\frac{k}{4c}$,and using the relations between $f$ and $g$ we deduce that
\begin{equation*}
\psi_{tt}-c^2\psi_{xx}-\frac{k^2}4\psi
=4c^2\left[ \lambda f''+f'-\alpha^2f\right] 
=\frac{c^2}{\lambda}\left[\xi^2g''(\xi)+\xi g'(\xi)-\xi^2g(\xi)\right].
\end{equation*}
Finally, we have to check that $0=\frac{c^2}{\lambda}\left[\xi^2g''(\xi)+\xi g'(\xi)-\xi^2g(\xi)\right]$. Fortunately, it is trivial since $g(\xi)=\frac{1}{2c}I_0(\xi)$ is a multiple of the modified Bessel equation of order 0 and parameter 1, which is precisely defined as the function that solves this ODE and this finishes the proof.
\end{proof}
We now prove the other part of Theorem \ref{exs,psi}, which refers to $\psi$ belonging to ${\mathcal C}^1(\mathbb{R},\mathcal{D}'(\mathbb{R}))$ and the initial conditions. This result was inspired by \cite{Tay}.
\begin{proof}[Proof of $\psi(x,t)\in{\mathcal C}^1(\mathbb{R},\mathcal{D}'(\mathbb{R}))$]
We have to see that there exists
$
\lim_{h\rightarrow0}\frac{\psi(\cdot,t+h)-\psi(\cdot,t)}h=\psi_t(\cdot,t)\in\mathcal C^0\left(\mathbb{R},\mathcal D'(\mathbb{R})\right)
$
with the limits computed using the topology of the arrival space $\mathcal D'(\mathbb{R})$, that is
\begin{equation*}
\left\langle\frac{\psi(\cdot,t+h)-\psi(\cdot,t)}h,\varphi\right\rangle\xrightarrow{h\rightarrow0}\left\langle \psi_t(\cdot,t),\varphi\right\rangle,
\end{equation*}
for all $\varphi\in\mathcal D(\mathbb{R})$ and $t\in\mathbb{R}$, and that this limit is continuous, that is
\begin{equation*}
\lim_{h\rightarrow0}\left\langle \psi_t(\cdot,t+h),\varphi\right\rangle =\left\langle \psi_t(\cdot,t),\varphi\right\rangle,
\end{equation*}
for all $\varphi\in\mathcal D(\mathbb{R})$ and $t\in\mathbb{R}$, where the notation $\left\langle\psi(\cdot,t),\varphi\right\rangle$ refers here to $\int_{\mathbb{R}}\psi(x,t)\varphi(x)dx$. Here we just prove the case $t=0$, the general case is done similarly and can be found in \cite{Nua}. Notice $\psi(x,0)=0$ is expected from the odd symmetry of the function. Indeed, we will see that with this choice, we will have a differentiable application, in particular a continuous one. 

Let us first take $1\gg h>0$ and $\varphi\in{\mathcal C}_0^\infty(\mathbb{R})$,
\begin{equation*}
\begin{split}
\left\langle \frac{\psi(\cdot,h)}{h},\varphi \right\rangle&=\int_{\mathbb{R}}\frac{1}{h}\psi(x,h)\varphi(x)ds 
\overset{h>0}=
\int_{\mathbb{R}}\frac{1}{2c}\frac{I_0\left(2\alpha \sqrt{c^2h^2-x^2} \right) }{h}\mathcal{X}_{[-ch,ch]}(x)\varphi(x)dx \\
&= \frac{1}{h}\int_{-ch}^{ch}\frac{1}{2c}I_0\left(2\alpha \sqrt{c^2h^2-x^2} \right)\varphi(x)dx.
\end{split}
\end{equation*}
Then, when $h\rightarrow0$, the interval of the integral reduces to 0 while the integrand tends to $\frac{1}{2c}$, it is bounded. So, the integral goes to 0 and then the quotient tends to $\frac{0}{0}$. Using Hopital's Rule,
\begin{multline*}
\frac d{dh}\left(\int_{-ch}^{ch}\frac1{2c}I_0\left(2\alpha\sqrt{c^2h^2-x^2}\right)\varphi(x)dx\right) \\
=\frac1{2c}\left\lbrace I_0(0)\varphi(ch)c -I_0(0)\varphi(-ch)(-c)\right\rbrace 
+\int_{-ch}^{ch}\frac1{2c}\frac{I_0'\left(2\alpha\sqrt{c^2h^2-x^2}\right)}{2\sqrt{c^2h^2-x^2}}2\alpha2c^2h\varphi(x)dx
\\
=\frac1{2}\left\lbrace \varphi(ch)+\varphi(-ch)\right\rbrace
+\int_{-ch}^{ch}\alpha ch\frac{I_0'\left(2\alpha\sqrt{c^2h^2-x^2}\right)}{2\sqrt{c^2h^2-x^2}}\varphi(x)dx,
\end{multline*}
after applying $I_0(0)=1$ and simplifying terms.  Letting $h\rightarrow0$ the integral vanishes since the integrand goes to 0 and the interval collapses and so we are left with
\begin{equation*}
=\frac1{2}\left\lbrace \varphi(0)+\varphi(0)\right\rbrace =\varphi	(0)=\left\langle \delta,\varphi \right\rangle.
\end{equation*}
When we do the limit with a negative $h$ we obtain the same result, concluding that $\psi_t(\cdot,0)=\delta(\cdot)\in\mathcal{D}'(\mathbb{R})$.

To see the continuity, we want to see that
\begin{equation*}
\lim_{h\rightarrow0}\left\langle \psi_t(\cdot,h),\varphi\right\rangle =\left\langle \psi_t(\cdot,0),\varphi\right\rangle=\left\langle \delta,\varphi\right\rangle.
\end{equation*}
Here we just present the case where $h\rightarrow 0^+$, the other one is similar. We have
\begin{equation*}
\left\langle \psi_t(\cdot,h),\varphi\right\rangle =\frac{1}{2}\varphi(ch)+\frac{1}{2}\varphi(-ch) 
+\int_{-ch}^{ch}\alpha ch\frac{I_0'\left(2\alpha\sqrt{c^2h^2-x^2}\right)}{\sqrt{c^2h^2-x^2}}\varphi(x)dx.
\end{equation*}
which is precisely what we obtained a few lines above, we conclude that $\lim_{h\rightarrow0}\left\langle \psi_t(\cdot,h),\varphi\right\rangle =\left\langle \psi_t(\cdot,0),\varphi\right\rangle=\left\langle \delta,\varphi\right\rangle$.
\end{proof}
\section{The General Initial Value Problem}\label{rfd}
Here we will prove Theorem \ref{exs,f,g} and Theorem \ref{uniq,f,g,v}. To do so, we will use the next intermediate results.
\begin{proposition}\label{rfd,ex,v,g}
Let $\psi(x,t)=\text{sgn(t)}\frac1{2c}I_0\left(2\alpha\sqrt{c^2t^2-x^2}\right){\mathcal X}_{\lbrack-c|t|,c|t|\rbrack}(x)$ and $g\in\mathcal{D}'(\mathbb{R})$ a distribution. Then, the distribution $v$ defined by
\begin{equation*}
\left\langle v(t),\varphi\right\rangle:=\left\langle g,\psi(t)\ast\varphi\right\rangle, 
\end{equation*}
for all $\varphi \in\mathcal{C}_0^\infty(\mathbb{R})$ is such that $v\in\mathcal C^1\left(\mathbb{R},\mathcal D'\left(\mathbb{R}\right)\right)$, as a two variable distribution is a solution in the distributional sense of
\begin{equation*}
\begin{array}{l}\left\{\begin{array}{cc}v_{tt}=c^2v_{xx}+\frac{k}{4}v\\v(0)=0, \quad v_t(0)=g&\end{array}\right.\\\end{array}
\end{equation*}
and its time-derivative is the distribution defined by 
\begin{equation*}
\left\langle v_t(t),\varphi\right\rangle :=\left\langle g,\psi_t(t)\ast\varphi\right\rangle 
\end{equation*}
for all $\varphi \in\mathcal{C}_0^\infty(\mathbb{R})$.
\end{proposition}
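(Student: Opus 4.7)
My strategy is to transfer each claimed property from $\psi$ to $v$ by exploiting the compact $x$-support of $\psi(\cdot,t)$, relying only on Theorem~\ref{exs,psi}. For each fixed $t$ the convolution $\psi(t)\ast\varphi$ is a smooth function with compact support in $x$, since $\psi(t)$ is supported in $[-c|t|,c|t|]$ and $\varphi\in\mathcal{C}_0^\infty(\mathbb{R})$, so $\langle g,\psi(t)\ast\varphi\rangle$ is well defined. Linearity in $\varphi$ is clear, and continuity in $\varphi$ follows from the standard fact that $\varphi_n\to\varphi$ in $\mathcal{D}(\mathbb{R})$ implies $\psi(t)\ast\varphi_n\to\psi(t)\ast\varphi$ in $\mathcal{C}_0^\infty(\mathbb{R})$; hence $v(t)\in\mathcal{D}'(\mathbb{R})$.

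To get $v\in\mathcal{C}^1(\mathbb{R},\mathcal{D}'(\mathbb{R}))$, I would show that for each fixed $\varphi\in\mathcal{C}_0^\infty(\mathbb{R})$ the map $t\mapsto\psi(t)\ast\varphi$ is $\mathcal{C}^1$ into $\mathcal{C}_0^\infty(\mathbb{R})$, with $x$-supports in a fixed compact for $t$ in a bounded interval. The key point is the representation $(\psi(t)\ast\varphi)(x)=\langle\psi(t),\varphi(x-\cdot)\rangle$: since $\{\varphi(x-\cdot):x\in K\}$ together with each of its $x$-derivatives forms a bounded subset of $\mathcal{D}(\mathbb{R})$, the conclusion $\psi\in\mathcal{C}^1(\mathbb{R},\mathcal{D}'(\mathbb{R}))$ from Theorem~\ref{exs,psi} promotes both $\psi(t_n)\to\psi(t)$ and $h^{-1}(\psi(t+h)-\psi(t))\to\psi_t(t)$ in $\mathcal{D}'(\mathbb{R})$ to uniform convergence with all derivatives on $K$. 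Pairing with $g$ then yields $\langle v_t(t),\varphi\rangle=\langle g,\psi_t(t)\ast\varphi\rangle$ and continuity of $t\mapsto v_t(t)$.

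For the initial conditions I would evaluate directly at $t=0$: $\psi(\cdot,0)\equiv 0$ gives $\langle v(0),\varphi\rangle=0$, while $\psi_t(\cdot,0)=\delta$ gives $\langle v_t(0),\varphi\rangle=\langle g,\delta\ast\varphi\rangle=\langle g,\varphi\rangle$. For the PDE, which is the heart of the argument, let $L$ denote the operator from Section~\ref{rf} and view $v$ as a two-variable distribution through $\langle v,\Phi\rangle:=\int\langle v(t),\Phi(\cdot,t)\rangle\,dt$ for $\Phi\in\mathcal{C}_0^\infty(\mathbb{R}^2)$. The plan is to apply Fubini to interchange the $t$-integration with the pairing against $g$, writing
\[
\langle v,L^*\Phi\rangle=\bigl\langle g(x),\,G(x)\bigr\rangle,\qquad G(x):=\int\bigl(\psi(t)\ast L^*\Phi(\cdot,t)\bigr)(x)\,dt,
\]
and then to recognize $G(x)=\langle\psi(y,t),L^*\Phi(x-y,t)\rangle_{(y,t)}=\langle\psi,L^*\tilde\Phi_x\rangle_{\mathbb{R}^2}$, where $\tilde\Phi_x(y,t):=\Phi(x-y,t)$, using the translation invariance of the constant-coefficient operator $L$. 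Since $L\psi=0$ distributionally by Theorem~\ref{exs,psi}, this forces $G\equiv 0$, and hence $Lv=0$.

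The main obstacle I anticipate is the Fubini-type interchange between the $t$-integration and the pairing $\langle g,\cdot\rangle_x$, which requires a careful check that $(x,t)\mapsto(\psi(t)\ast L^*\Phi(\cdot,t))(x)$ is jointly regular enough and has $x$-support contained in a fixed compact uniformly in $t$ (both true from the finite-propagation-speed support of $\psi$ and the $\mathcal{C}^1$ time-regularity granted by Theorem~\ref{exs,psi}). The analogous upgrade from $\mathcal{D}'$-convergence of $\psi(t_n)$ to smooth convergence of $\psi(t_n)\ast\varphi$, used throughout the earlier steps, is of the same routine-but-careful nature.
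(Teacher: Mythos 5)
Your proposal is correct and follows essentially the same route as the paper's proof: well-definedness from the compact $x$-support of $\psi(t)$, the PDE via interchanging the $t$-integration with the pairing against $g$ and recognizing the inner integral as $\langle\psi,L^*\tilde\Phi_x\rangle_{\mathbb{R}^2}$ so that $L\psi=0$ (Theorem~\ref{exs,psi}) finishes the argument, the initial conditions by direct evaluation using $\psi(0)=0$ and $\psi_t(0)=\delta$, and the $\mathcal{C}^1$ regularity inherited from that of $\psi$. If anything, you are more careful than the paper on the two points it glosses over, namely the Banach--Steinhaus upgrade of $\mathcal{D}'$-convergence of $\psi(t)$ to convergence of $\psi(t)\ast\varphi$ in $\mathcal{C}_0^\infty(\mathbb{R})$, and the justification of the Fubini-type interchange.
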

Before the proof, let us state these rather useful Remarks.
\begin{remark}
This candidate to solution $v$ may be understood as a resulting distribution on the $x$ variable for each $t\in\mathbb{R}$ or also as a two dimensional distribution. We will use this double meaning in different parts of the proof.
\end{remark}
\begin{remark}\label{remimp}
From Theorem \ref{exs,psi}, we know that
\begin{equation*}
\left\langle L(\psi),\phi\right\rangle=\int_{\mathbb{R}^2}\psi L^*(\phi) dxdt=0, 
\end{equation*}
for all $\phi\in\mathcal{C}_0^\infty(\mathbb{R}^2)$, a property that will be essential in the proof.
\end{remark}
\begin{remark}\label{rem,g,l}
In certain cases, we \emph{may} write
\begin{equation*}
v(x,t)=\int_{\mathbb{R}}\psi(x-y,t)g(y)dy 
= \int_{x-ct}^{x+ct}\frac1{2c}I_0(2\alpha\sqrt{c^2t^2-(x-y)^2})g(y) dy,
\end{equation*}
when $g$ is such that this expression makes sense, for example when $g\in L^2$.
\end{remark}
\begin{proof}[Proof of Proposition \ref{rfd,ex,v,g}]
First of all, let us see that our solution is well-defined, observe that $\psi(t)$ is a function with compact support and continuous inside $[-c|t|,c|t|]$. Therefore, $\psi(t) \ast \varphi\in\mathcal{C}_0^\infty(\mathbb{R})$ and then it makes sense to compute the action of the distribution $g$ on $\psi(t) \ast \varphi$, so $v$ is well-defined, it is a distribution.

Let us now interpret our candidate to solution as a two dimensional distribution. For $L$ the same differential operator as before, we have to give some sense to $\left\langle L(v),\phi\right\rangle=0$, a distribution acting on a two-variable test functions.  A reasonable definition would be 
\begin{equation*}
\left\langle L(v),\phi\right\rangle:=
\int_\mathbb{R} \left\langle v(t),L^*(\phi)\right\rangle dt
\end{equation*}
because it is fully well-defined and coincides with the action of any $h\in L^1_{loc}(\mathbb{R}^2)$ against a two-variable test function. We proceed
\begin{equation*}
\begin{split}
\int_\mathbb{R} \left\langle v(t),L^*(\phi)\right\rangle dt=\int_\mathbb{R} \left\langle g,\psi(t)\ast L^*(\phi(t))\right\rangle dt 
= \left\langle g,\int_\mathbb{R}\psi(t)\ast L^*(\phi(t))dt\right\rangle.
\end{split}
\end{equation*}
Notice we can enter the integral inside $\left\langle \cdot,\cdot\right\rangle $ because the duality product is indeed a linear continuous form and the integral is a limiting process based on sums that only concerns $\psi(t)\ast L`^*(\phi (t))$. Now, we just study
\begin{equation*}
\int_\mathbb{R}\psi(t)\ast L^*(\phi(t))dt= \int_{\mathbb{R}^2} \psi(y,t) L^*(\phi(x-y,t))dydt.
\end{equation*}
For a fixed $x\in\mathbb{R}$, if we write $\phi(x-y,t)=\varphi(y,t)$, we have that $\varphi\in\mathcal{C}_0^\infty(\mathbb{R}^2)$ and $L^*(\phi(x-y,t))=L^*(\varphi(y,t))\in\mathcal{C}_0^\infty(\mathbb{R}^2)$ so that
\begin{equation*}
\left\langle g,\underbrace{\int_{\mathbb{R}^2}\psi(y,t)L^\ast(\varphi(y,t))dydt}_0\right\rangle =0,
\end{equation*}
thanks to what we commented in Remark \ref{remimp}. This way, we make sure that the distribution $v$ is a solution in the sense of distributions of the differential equation.

The first initial condition should be $v(x,0)=0$ in the sense of distributions in $x$. Indeed, $\left\langle v(0),\varphi\right\rangle:=\left\langle g,\psi(0)\ast\varphi\right\rangle=\left\langle g,0\ast\varphi\right\rangle=0$, for all $\varphi\in\mathcal{C}_0^\infty(\mathbb{R})$, so that $v(0)=0$ as a distribution. Again, we will see that $v_t(0)=g$ as a consequence of $v(t)\in\mathcal{C}^1(\mathbb{R},\mathcal{D'(\mathbb{R})})$.
Thanks to Theorem \ref{exs,psi}, we know that $\psi(t)\in\mathcal{C}^1(\mathbb{R},\mathcal{D'(\mathbb{R})})$ and hence we can write
\begin{equation*}
\left\langle v_t(t),\varphi\right\rangle :=\left\langle g,\psi_t(t)\ast\varphi\right\rangle.
\end{equation*}
This expression is well defined since $\psi_t$ is compactly supported both in the distributional and functional sense and therefore $\psi_t\ast\varphi\in\mathcal{C}_0^\infty(\mathbb{R})$. What is more, since $\psi(t)\in\mathcal{C}^1(\mathbb{R},\mathcal{D'(\mathbb{R})})$ then $v(t)\in\mathcal{C}^1(\mathbb{R},\mathcal{D'(\mathbb{R})})$.
As a result, for the second initial condition, for $t=0$ we have $\left\langle v_t(0),\varphi\right\rangle=\left\langle g,\psi_t(0)\ast\varphi\right\rangle=\left\langle g,\delta\ast\varphi\right\rangle=\left\langle g,\varphi\right\rangle$ so we conclude that $v_t(0)=g$ as a distribution.
\end{proof}
\begin{remark}\label{rem,trans}
According to what we have seen during the proof, any distribution $v$ defined by $\left\langle v(t),\varphi\right\rangle:=\left\langle g,\psi(t)\ast\varphi\right\rangle$ satisfies any PDE in the sense of distributions if its \emph{kernel} or fundamental solution $\psi(t)$ also does so.
\end{remark}
\begin{lemma}\label{cpr,lema}
Let $g\in\mathcal{D}'(\mathbb{R})$. Then, the distribution $w$ defined by
\begin{equation*}
\left\langle w,\varphi\right\rangle :=\left\langle g,\psi_t\ast\varphi\right\rangle
\end{equation*}
for all $\varphi\in\mathcal{C}_0^\infty(\mathbb{R})$ solves
\begin{equation*}
\begin{array}{l}\left\{\begin{array}{cc}w_{tt}=c^2w_{xx}+\frac{k}{4}w\\w(0)=g, \quad w_t(0)=0&\end{array}\right.\\\end{array}
\end{equation*}
in the sense of distributions.
\end{lemma}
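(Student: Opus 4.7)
The plan is to copy the argument of Proposition \ref{rfd,ex,v,g} almost word-for-word, only replacing the kernel $\psi$ by its time derivative $\psi_t$. By Remark \ref{rem,trans} the sole PDE obligation is that the new kernel $\psi_t$ still satisfies $L(\psi_t)=0$ in $\mathcal D'(\mathbb R^2)$, where $L=\partial_{tt}-c^2\partial_{xx}-\frac{k^2}{4}$. Since $L$ has constant coefficients, $\partial_t$ commutes with $L$ on distributions, and Theorem \ref{exs,psi} gives $L(\psi_t)=\partial_t L(\psi)=0$ immediately. The computation
\[
\langle L(w),\phi\rangle=\int_{\mathbb R}\langle w(t),L^{\ast}\phi(\cdot,t)\rangle\,dt=\Bigl\langle g,\int_{\mathbb R^2}\psi_t(y,t)\,L^{\ast}\phi(x-y,t)\,dy\,dt\Bigr\rangle=0
\]
then proceeds exactly as in Proposition \ref{rfd,ex,v,g}, delivering $L(w)=0$.

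For the initial conditions, the first one is free: $\psi_t(\cdot,0)=\delta$ by Theorem \ref{exs,psi}, so
\[
\langle w(0),\varphi\rangle=\langle g,\psi_t(0)\ast\varphi\rangle=\langle g,\delta\ast\varphi\rangle=\langle g,\varphi\rangle,
\]
i.e.\ $w(0)=g$ in $\mathcal D'(\mathbb R)$. For $w_t(0)=0$ I would bootstrap the time-regularity of $\psi$ using the PDE itself: because $L(\psi)=0$, the identity $\psi_{tt}(t)=c^2\psi_{xx}(t)+\frac{k^2}{4}\psi(t)$ holds pointwise in $t$ in $\mathcal D'(\mathbb R)$, and since spatial differentiation is continuous on $\mathcal D'(\mathbb R)$ and $\psi\in\mathcal C^1(\mathbb R,\mathcal D'(\mathbb R))$ by Theorem \ref{exs,psi}, the right-hand side lies in $\mathcal C^1(\mathbb R,\mathcal D'(\mathbb R))$. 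Hence $\psi\in\mathcal C^2(\mathbb R,\mathcal D'(\mathbb R))$ and, using $\psi(\cdot,0)=0$, the identity yields $\psi_{tt}(\cdot,0)=0$; this transfers through the convolution to $\langle w_t(0),\varphi\rangle=\langle g,\psi_{tt}(0)\ast\varphi\rangle=0$.

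The main obstacle, as in Proposition \ref{rfd,ex,v,g}, is the passage between the $\mathcal D'(\mathbb R^2)$-identity $L(w)=0$ and the $\mathcal C^1(\mathbb R,\mathcal D'(\mathbb R))$-regularity together with the pointwise formula for $w_t$. This reduces to sliding the time derivative inside the bracket $\langle g,\cdot\rangle$, which in turn requires the difference quotients $h^{-1}[(\psi_t(t+h)-\psi_t(t))\ast\varphi]$ to converge in $\mathcal D(\mathbb R)$, not merely in $\mathcal D'(\mathbb R)$, to $\psi_{tt}(t)\ast\varphi$. The convolution with $\varphi\in\mathcal C^\infty_0(\mathbb R)$ and the fact that $\psi_t(t)$ is supported in $[-c|t|,c|t|]$ guarantee that all functions involved share a compact $x$-support on any bounded $t$-interval, so the difference-quotient argument of Theorem \ref{exs,psi} adapts directly to produce the required $\mathcal D(\mathbb R)$-convergence — this is the technical but routine piece that closes the proof.
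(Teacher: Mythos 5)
Your proof follows the same route as the paper's: well-definedness from the compact support of $\psi_t$, the PDE via the transference principle of Remark \ref{rem,trans} combined with the observation that $\partial_t$ commutes with the constant-coefficient operator $L$ so that $L(\psi_t)=\partial_t L(\psi)=0$, and the initial conditions read off from $\psi_t(\cdot,0)=\delta$. The only difference is that you spell out the bootstrap giving $\psi_{tt}(\cdot,0)=0$ and the $\mathcal{D}(\mathbb{R})$-convergence of the difference quotients, details the paper dismisses as ``easily deduced''; your additions are correct and fill a genuine gap in exposition rather than changing the argument.
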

\begin{proof}
Firstly, it is reasonable to define $w$ this way. Notice $\psi_t$ is compactly supported as a distribution and, as a result, $\psi_t\ast\varphi\in\mathcal{C}_0^\infty(\mathbb{R})$. Let us now recall Remark \ref{rem,trans} which says that the distribution $w$ satisfies the PDE if $\psi_t(t)$ does so.
We know $\psi(t)$ does solve the PDE and if we derive the PDE of (\ref{main,v}) with respect to time we obtain that $\psi_t$ also solves it. Consequently, $w$ satisfies the PDE in the sense of distributions. As for the initial conditions, they are easily deduced (remember that $\psi_t(0)=\delta$).
\end{proof}
With these partial results we are now able to give a proof of Theorem \ref{exs,f,g}.
\begin{proof}[Proof of Theorem \ref{exs,f,g}]
We divide the problem into two smaller ones, one with homogeneous first initial condition and the other with homogeneous second initial condition. We use Lemma \ref{cpr,lema} for the problem that has a distribution as its first initial condition and Proposition \ref{rfd,ex,v,g}. Then, the solution for the general problem is the sum of these two partial solutions acting on the same test function.
\end{proof}
In order to prepare the proof of uniqueness, we present the following
\begin{proposition}\label{rfr,prop,s,g}
Let $\varphi\in\mathcal{C}_0^\infty(\mathbb{R})$, let $f,g\in\mathcal{D}'(\mathbb{R})$ and $v$ a solution in the sense of distributions of problem (\ref{main,v}).

The function $s(x)=v\ast\varphi(x):=\left\langle v,\varphi(x-\cdot) \right\rangle\in\mathcal{C}^\infty(\mathbb{R})$ is well defined and it is a classical solution in the sense of distributions of
\begin{equation}\label{classicproblem}
\begin{array}{l}\left\{\begin{array}{cc}s_{tt}=c^2s_{xx}+\frac{k}{4}s\\s(x,0)=(f\ast\varphi)(x), \quad s_t(x,0)=((g+\frac{k}{2}f)\ast\varphi)(x)&\end{array}\right.\\\end{array}
\end{equation}
\end{proposition}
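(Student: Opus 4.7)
The plan is to construct $s$ slicewise in $t$, use convolution to transfer regularity from $v$ onto the test function $\varphi$, derive the PDE for $s$ distributionally from that for $v$, and then bootstrap the distributional identity into a classical one using the continuity of $s_{xx}$ and $s$.

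First I would observe that, since $v\in\mathcal{C}^1(\mathbb{R},\mathcal{D}'(\mathbb{R}))$, for each fixed $t\in\mathbb{R}$ the expression $s(x,t):=\langle v(t),\varphi(x-\cdot)\rangle = (v(t)\ast\varphi)(x)$ is well defined. Standard properties of the convolution of a distribution with a test function give $s(\cdot,t)\in\mathcal{C}^\infty(\mathbb{R})$ with $\partial_x^k s(x,t) = (v(t)\ast\varphi^{(k)})(x)$, and the continuity of the map $t\mapsto v(t)\in\mathcal{D}'(\mathbb{R})$ makes $s$ and each of its $x$-derivatives continuous in $t$. The $\mathcal{C}^1$ regularity of $t\mapsto v(t)$ then yields that $s$ is $\mathcal{C}^1$ in $t$ with $s_t(x,t) = (v_t(t)\ast\varphi)(x)$, which is again $\mathcal{C}^\infty$ in $x$ by the same reasoning.

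Next I would show that $s$ satisfies $s_{tt}=c^2 s_{xx}+\tfrac{k}{4}s$ distributionally on $\mathbb{R}^2$. Since convolution in $x$ with a test function commutes with every partial derivative in the distributional sense, the identity $v_{tt}-c^2 v_{xx}-\tfrac{k}{4}v=0$ in $\mathcal{D}'(\mathbb{R}^2)$ transfers directly to $s_{tt}-c^2 s_{xx}-\tfrac{k}{4}s=0$ in $\mathcal{D}'(\mathbb{R}^2)$. The key observation is that the right-hand side $c^2 s_{xx}+\tfrac{k}{4}s$ is already a classical continuous function of $(x,t)$ by the first paragraph. Hence $s_{tt}$, \emph{a priori} only a distribution, in fact coincides with a continuous function; combined with the continuity of $s_t$, this forces $s_t$ to be $\mathcal{C}^1$ in $t$, and consequently $s_{tt}$ exists in the classical sense and the PDE holds pointwise.

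The initial conditions then follow immediately from $v(0)=f$ and $v_t(0)=g+\tfrac{k}{2}f$: one gets $s(x,0)=(f\ast\varphi)(x)$ and $s_t(x,0)=\bigl((g+\tfrac{k}{2}f)\ast\varphi\bigr)(x)$. The main obstacle I anticipate is the regularity bootstrap in the previous paragraph: justifying that a function which is only $\mathcal{C}^1$ in $t$ but whose distributional second $t$-derivative equals a continuous function is in fact $\mathcal{C}^2$ in $t$. This is a standard one-variable distributional fact (any distribution whose derivative is continuous is itself continuously differentiable, by reconstruction as an antiderivative), but it is the one step that must be invoked carefully rather than obtained by direct computation.
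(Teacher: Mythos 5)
Your proposal is correct, and its central step --- transferring the distributional identity $L(v)=0$ to $s=v\ast_x\varphi$ by commuting the $x$-convolution with the operator $L$ --- is exactly what the paper does, except that the paper carries out this commutation by hand: it writes $\int_{\mathbb{R}^2}sL^*(\phi)\,dx\,dt$, inserts $s(x,t)=\left\langle v,\varphi(x-\cdot)\right\rangle$, swaps the order of integration, changes variables $z=x-\cdot$, and recognizes the inner integral as $\left\langle v,L^*(\gamma)\right\rangle=0$ with $\gamma(y,t)=\phi(z+y,t)$. Your compressed phrasing is legitimate precisely because this computation is what justifies it; if you wrote it out you would reproduce the paper's proof. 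Where you genuinely go further is the regularity bootstrap: the paper's proof stops at the distributional identity $\int s\,L^*(\phi)=0$ and treats the initial conditions as immediate, never actually upgrading $s$ to a classical ($\mathcal{C}^2$ in $t$) solution, even though the statement calls $s$ a ``classical solution'' and the subsequent uniqueness argument applies D'Alembert/Duhamel to $s-r$, which does require that extra regularity. Your observation that $c^2s_{xx}+\tfrac{k}{4}s$ is already continuous, so the one-variable fact ``a distribution with continuous derivative is $\mathcal{C}^1$'' upgrades $s_t$ and makes the equation hold pointwise, fills in a step the paper leaves implicit; just be careful, as you note, to reduce the two-variable distributional derivative to a one-variable one for each fixed $x$ (e.g.\ by testing against products $\chi(x)\rho(t)$) before invoking that fact.
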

\begin{proof}
Both initial conditions are satisfied easily thanks to the initial conditions that $v$ satisfies. As for the differential equation, we need
\begin{equation*}
\int_{\mathbb{R}^2}L(s)\phi\;dx dt:=\int_{\mathbb{R}^2}sL^*(\phi)dx dt=0,
\end{equation*}
for all $\phi\in\mathcal{C}_0^\infty(\mathbb{R}^2)$.
Let us remark that this time the integration is already well defined, since $s\in\mathcal{C}^\infty(\mathbb{R})$ and not a general distribution.
Therefore, let $\phi\in\mathcal{C}_0^\infty(\mathbb{R}^2)$, we can write
\begin{equation*}
\begin{split}
\int_{\mathbb{R}^2}sL^*(\phi)dx dt&=\int_\mathbb{R}\left( \int_{\mathbb{R}}s L^*(\phi)dx\right) dt \\
&=\int_\mathbb{R}\left( \int_{\mathbb{R}}\left\langle v,\varphi(x-\cdot) \right\rangle L^*(\phi)dx\right) dt \\
&=\int_\mathbb{R}\left( \int_{\mathbb{R}}\left\langle v,L^*(\phi)\varphi(x-\cdot) \right\rangle dt\right) dx 
\\
&=\int_\mathbb{R}\left( \int_{\mathbb{R}}\left\langle v,L^*(\phi(x,t))\varphi(x-\cdot) \right\rangle dt\right) dx \\
&\overset{x-\cdot=z}{\underset{dx=dz}=}\int_\mathbb{R}\left( \int_{\mathbb{R}}\left\langle v,L^*(\phi(z+\cdot,t))\varphi(z) \right\rangle dt\right) dz 
\\
&=\int_\mathbb{R}\varphi(z)\left( \int_{\mathbb{R}}\left\langle v,L^*(\phi(z+\cdot,t)) \right\rangle dt\right) dz.
\end{split}
\end{equation*}

Fixed $z\in\mathbb{R}$, if we write $\phi(z+y,t)=\gamma(y,t)$, we have that $\gamma\in\mathcal{C}_0^\infty(\mathbb{R}^2)$ and $L^*(\phi(z+y,t))=L^*(\gamma(y,t))\in\mathcal{C}_0^\infty(\mathbb{R}^2)$ so that
\begin{equation*}
\int_{\mathbb{R}}\varphi(z)\underbrace{\left(\int_{\mathbb{R}}\left\langle v(t),L^\ast(\gamma)\right\rangle dt\right)}_0dz=0
\end{equation*}
because $v$ is a solution in the distributional sense of problem (\ref{main,v}) and so $s$ solves the problem in the classical sense.
\end{proof}
Now we present the proof of the uniqueness based on the use of convolutions and classical solutions.
\begin{proof}[Proof of Theorem \ref{uniq,f,g,v}]
Assume $v$, $w$ are both solutions of (\ref{main,v}) and let $\varphi\in\mathcal{D}(\mathbb{R})$. We construct $s(x)=v\ast\varphi(x)$ and $r(x)=w\ast\varphi(x)$, both being $\mathcal{C}^\infty(\mathbb{R})$, by Proposition \ref{rfr,prop,s,g} they are classical solutions of problem (\ref{classicproblem}). If we see that $u(x)=s(x)-r(x)$ is the zero function then we are done.

First of all, let us remark that $u(x,t)$ is continuous in $\mathbb{R}^2$. Continuity on the space variable $x$ is granted by the convolution properties. Continuity on the time variable $t$ is deduced thanks to Theorem \ref{exs,f,g} and the definition of continuity in the space of distributions. This implies that $u$ will be bounded in any compact set.

Let us now take $T=\frac{2}{k}$ and $x\in\mathbb{R}$, we consider the characteristic triangle
\begin{equation*}
\mathcal T=\left\{(y,s)\in\mathbb{R}^2:\;s\in\lbrack0,T\rbrack,y\in\lbrack x-c(T-s),x+c(T-s)\rbrack\right\}.
\end{equation*}
We know that $u$ is continuous and this triangle is compact so there exists $M>0$ such that $|u(y,s)|\leq M$ there.
Since $u$ solves $u_{tt}=c^2u_{xx}+\frac{k^2}{4}u$ with homogeneous initial conditions ($f=g=0$), using D'Alembert formula we necessarily have that
\begin{equation*}
u(x,t)=\frac{1}{2c}\int_0^t \int_{x-c(t-s)}^{x+c(t-s)}\frac	{k^2}{4}u(y,s) dy ds.
\end{equation*}
Now, the pair $(y,s)$ is in our triangle $\mathcal{T}$ and so $|u(y,s)|\leq M$. Then,
\begin{equation*}
\begin{split}
|u(x,t)|\leq \frac{1}{2c}\int_0^t \int_{x-c(t-s)}^{x+c(t-s)}\frac	{k^2}{4}M dy ds\leq\frac{1}{2c} \frac{k^2}{4}M\int_0^T \int_{x-c(T-s)}^{x+c(T-s)}dy ds 
&=\frac{1}{2c}\frac{k^2}{4}cT^2M \\
&=\frac{M}{2}.
\end{split}
\end{equation*}
This way we see that if $|u(x,t)|\leq M$ then $|u(x,t)|\leq M/2$ from which we conclude that $u(x,t)=0$, for all $0 \leq t \leq T$. Notice this argument can be used at any $x\in\mathbb{R}$ because despite $M$ may depend on the point, the time $T$ for which we get this contraction does not. Hence, $u(x,t)=0$, for all $(x,t)\in \mathbb{R}\times[0,T]$.

Now, if we consider $\widetilde u(x,t)=u(x,T+t)$ whose initial conditions are $\widetilde u(x,0)=u(x,T)=0$ and $\widetilde u_t(x,0)=u_t(x,T)=0$ we can repeat the same argument and get that $\widetilde u(x,t)=0$, for all $(x,t)\in \mathbb{R}\times[0,T]$ and so $u(x,t)=0$, for all $(x,t)\in \mathbb{R}\times[0,2T]$. By induction, it follows easily that $u(x,t)=0$, for all $(x,t)\in \mathbb{R}\times\mathbb{R}^+$. The same can be done with negative times, $u(x,t)=0$ in $\mathbb{R}^2$ and we are done.
\end{proof}
\begin{remark}\label{Eu sol total dist}
The formula given by (\ref{exs,uniq,f,g,u})
is the unique solution in $\mathcal{C}^1(\mathbb{R},\mathcal{D}'(\mathbb{R}))$ of
problem (\ref{main,u}). It easily follows from taking $v=e^{\frac{k}{2}t}u$ and applying the previous results.
\end{remark}

\section{Application 1: The Semigroup}\label{sg}
We now study the problem (\ref{main,u})
when $(f,g)$ belong to the Hilbert Space $ H^1(\mathbb{R})\times L^2(\mathbb{R})$. We will apply the results found above for general distributions to obtain properties of the solution in this special case.
\begin{remark}\label{sg,th,u,f,g,h,l}
Given $(f,g)\in H^1(\mathbb{R})\times L^2(\mathbb{R})$, the solution to the initial value problem (\ref{main,u}) is given by
\begin{equation*}
\begin{split}
u(x,t)&=e^{\frac{-k}{2}t}\left\lbrace \frac{1}{2}\left[f(x+ct)+f(x-ct)\right]+\alpha ct\int_{x-ct}^{x+ct}\frac{I_0'(2\alpha\sqrt\lambda)}{\sqrt\lambda}f(y)dy \right. \\
&\qquad\qquad\left.+\int_{x-ct}^{x+ct}\frac1{2c}I_0(2\alpha\sqrt\lambda)\left[ \frac{k}{2}f(y)+g(y)\right] dy \right\rbrace,
\end{split}
\end{equation*}
with $\lambda=c^2t^2-(x-y)^2$.
\end{remark}
\begin{definition}
A family $\left\lbrace \Gamma_t \right\rbrace _{t\geq0}$ of bounded linear operators on a Banach space $X$ into itself is said to have the semigroup property if
\begin{equation*}
\Gamma_0=I_d \text{ and }\Gamma_t(\Gamma_s)=\Gamma_{t+s}\quad\forall t,s\geq0.
\end{equation*}
\end{definition}
Let us consider $X=H^1(\mathbb{R})\times L^2(\mathbb{R})$ and the operator that for each $t\geq0$ and for each $(f,g)\in X$ it gives us $\Gamma_t(f,g)=$ the solution and its time derivative of problem (\ref{main,u}),  with $f,g$ as the initial conditions. To ease the notations, we write $u(t)$ and $u_t(t)$ to refer to such solutions. We have the following
\begin{theorem}\label{sg,th}
Let $(f,g)\in X$. Then,
\begin{enumerate}
\item $\Gamma_t(f,g)=(u(t),u_t(t))\in X$.
\item $\Gamma_t$ has the semigroup property.
\item $\forall t\in\mathbb{R} \quad \exists M_1,M_2,M_3,N_1,N_2,N_3,P_1,P_2>0$ such that
\begin{enumerate}
\item $\Vert u \Vert_{L^2(\mathbb{R})} \leq e^{\frac{-k}{2}t}\left\lbrace M_1\Vert f \Vert_{L^2(\mathbb{R})} + N_1 \Vert g \Vert_{L^2(\mathbb{R})}\right\rbrace$,
\item $\Vert u_t \Vert_{L^2(\mathbb{R})} \leq e^{\frac{-k}{2}t}\left\lbrace M_2\Vert f \Vert_{L^2(\mathbb{R})} + N_2 \Vert g \Vert_{L^2(\mathbb{R})} + P_1\Vert f' \Vert_{L^2(\mathbb{R})}\right\rbrace$,
\item $\Vert u_x \Vert_{L^2(\mathbb{R})} \leq e^{\frac{-k}{2}t}\left\lbrace M_3\Vert f \Vert_{L^2(\mathbb{R})} + N_3 \Vert g \Vert_{L^2(\mathbb{R})} + P_2\Vert f' \Vert_{L^2(\mathbb{R})}\right\rbrace$,
\end{enumerate}
the semigroup acts continously.
\end{enumerate}
\end{theorem}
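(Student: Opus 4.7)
The plan is to prove the estimates in part (3) first, since they immediately imply (1), and then to deduce the semigroup property (2) from the uniqueness Theorem \ref{uniq,f,g,v}. Throughout, the constants will be allowed to depend on $t$, which is what the statement permits.

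For part (3)(a), I would start from the explicit formula in Remark \ref{sg,th,u,f,g,h,l}, which expresses $u(x,t)$ as $e^{-kt/2}$ times the sum of three pieces: the translation term $\tfrac{1}{2}[f(x+ct)+f(x-ct)]$, the integral $\alpha ct\int_{x-ct}^{x+ct}\frac{I_0'(2\alpha\sqrt{\lambda})}{\sqrt{\lambda}}f(y)\,dy$, and the integral $\int_{x-ct}^{x+ct}\frac{1}{2c}I_0(2\alpha\sqrt{\lambda})\bigl[\tfrac{k}{2}f(y)+g(y)\bigr]dy$. The translation term has $L^2$ norm $\|f\|_{L^2}$ by translation invariance. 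The two integral pieces are convolutions of $f$ and of $\tfrac{k}{2}f+g$ against compactly supported kernels $K_1(\cdot,t)$ and $K_2(\cdot,t)$ supported in $[-c|t|,c|t|]$. I would apply Young's inequality, $\|K_j(\cdot,t)\ast h\|_{L^2}\leq \|K_j(\cdot,t)\|_{L^1}\|h\|_{L^2}$, and bound the $L^1$ norms by their sup norms times the length of the support: $K_1$ is bounded by $\frac{1}{2c}I_0(2\alpha c|t|)$, and $K_2$ has the apparent singularity $I_0'(2\alpha\sqrt{\lambda})/\sqrt{\lambda}$ at $\lambda=0$, which is removable since the power series $I_0'(z)=\sum_{n\geq 0}\frac{(z/2)^{2n+1}}{n!(n+1)!}$ shows $I_0'(z)/z$ is smooth and bounded on any compact interval. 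This yields the $M_1,N_1$ inequality.

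For (3)(b) and (3)(c), I would differentiate the explicit formula in $t$ and $x$ respectively. In each case, the Leibniz rule produces (i) boundary contributions from the moving integration endpoints, which collapse to pointwise evaluations of $f$ and $\tfrac{k}{2}f+g$ at $x\pm ct$ (hence controlled by $\|f\|_{L^2}$ and $\|g\|_{L^2}$ after integration in $x$, using that $\|f(\cdot\pm ct)\|_{L^2}=\|f\|_{L^2}$), (ii) a contribution $\tfrac{c}{2}[f'(x+ct)\mp f'(x-ct)]$ arising from differentiating the translation piece, which explains the $\|f'\|_{L^2}$ term, and (iii) interior integrals of the form (convolution kernel)$\ast f$ or (convolution kernel)$\ast(\tfrac{k}{2}f+g)$ that are handled by the same Young-plus-series argument as above. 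The main obstacle here is that differentiating $K_2$ generates factors of $\lambda^{-3/2}$ or $I_0''(2\alpha\sqrt{\lambda})/\lambda$ near the characteristic boundary $\lambda=0$; one must use the Bessel recurrence $I_0''=I_0-I_0'/z$ together with the power series expansions to confirm that the resulting kernels are continuous on their compact support and therefore genuinely in $L^1$. Once these estimates are in place, part (1) is immediate: (a) gives $u(t)\in L^2$, while (b) and (c) together give $u_t(t),u_x(t)\in L^2$, so $u(t)\in H^1$ and $\Gamma_t(f,g)\in X$.

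For part (2), $\Gamma_0=I_d$ is the initial condition built into problem \eqref{main,u}. For the composition property with $t,s\geq 0$, given $(f,g)\in X$, let $u$ be the corresponding solution on $\mathbb{R}\times\mathbb{R}$ and define $w(x,\tau):=u(x,s+\tau)$. Then $w$ satisfies the same damped wave PDE with initial data $(u(\cdot,s),u_t(\cdot,s))=\Gamma_s(f,g)\in X$ (by part (1)), and $w\in\mathcal{C}^1(\mathbb{R},\mathcal{D}'(\mathbb{R}))$. By the uniqueness result Theorem \ref{uniq,f,g,v}, $w(\cdot,t)$ and $w_\tau(\cdot,t)$ coincide with the first and second components of $\Gamma_t(\Gamma_s(f,g))$; but by definition of $w$ they also equal $u(\cdot,s+t)$ and $u_t(\cdot,s+t)$, namely $\Gamma_{t+s}(f,g)$. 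The continuity statement at the end of (3) is just the fact that the operator norm of $\Gamma_t$ on $X$ is bounded by the explicit constants produced in the three estimates.
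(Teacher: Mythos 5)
Your proposal is correct and follows essentially the strategy the paper itself indicates (the paper defers the full proof to \cite{Nua} and only states that it treats $u$ as a combination of convolutions and applies Young's inequality to each one, which is exactly your part (3); your derivation of part (2) from Theorem \ref{uniq,f,g,v} is the natural completion and consistent with the tools the paper develops). The only blemish is the swapped labels of the kernels $K_1$ and $K_2$ when you discuss which one carries the removable singularity $I_0'(2\alpha\sqrt{\lambda})/\sqrt{\lambda}$; the mathematics is unaffected.
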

\begin{remark}
The proof of Theorem (\ref{sg,th}) can be found in \cite{Nua}, it relies on the fact that $u(x,t)$ is a combination of convolutions, the $L^2$ estimates of $u$ and its derivatives are obtained applying Young's Inequality properly on each convolution.
\end{remark}
\section{Application 2: A Financial Model}\label{fm}
Let us assume a particle moves on the discrete set $\left\lbrace k\Delta x\mid k\in\mathbb{Z}\right\rbrace \subset \mathbb{R}$ at time intervals of $\Delta t$.  Let us suppose that with probability $p$ it repeats the same move as in the previous jump and with probability $1-p$ does the contrary move. In the limiting process, when $\Delta x$ and $\Delta t$ are small, we will assume that $p$ is near 1, representing this way some kind of inertia in the movement.

In order to deduce which laws do the movement follow, let us denote the pair  $(k,n)=$the particle is in the position $x=k\Delta x$ at time $t=n\Delta t$. Let us also define
\begin{equation*}
\begin{split}
\alpha(k,n)=\text{P}\left( \text{the particle is in }(k,n) \text{ and comes from }(k-1,n-1)\right), \\
\beta(k,n)=\text{P}\left( \text{the particle is in }(k,n) \text{ and comes from }(k+1,n-1)\right).
\end{split}
\end{equation*}
We are interested in deducing a law for $\gamma(k,n)=\alpha(k,n)+\beta(k,n)$, which is the probability of the particle being in $(k,n)$. It is well known that $\gamma$ satisfies
\begin{equation}
\frac{1}{c^2}\gamma_{tt}+\frac{k}{c^2}\gamma_t=\gamma_{xx},
\end{equation}
where $c=\frac{\Delta x}{\Delta t}$ and $k=\lim_{\Delta t\rightarrow0}\frac{2-2p}{\Delta t}\geq0$. Reorganizing and denoting $\gamma=u$ we are left with.
\begin{equation*}
u_{tt}+ku_t=c^2u_{xx},
\end{equation*}
which is precisely the problem we have been studying so far.

Let us now consider a financial asset, a stock share, for example, whose price resembles the motion of the described particle, i.e., it shows certain tendency to repeat the movement previously done.
Then, given suitable initial conditions this equation models the probability density function of the price of the asset, a random variable. We are going to study  the problem given by
\begin{equation}\label{ap2,d,d'}
\begin{array}{l}\left\{\begin{array}{cc}u_{tt}+ku_t=c^2u_{xx}\\u(0)=\delta, \quad u_t(0)=-c\delta '&\end{array}\right.\\\end{array}
\end{equation}
The first initial condition $u(x,0)=\delta(x)$ means that we are absolutely sure the particle, in this case the price of the asset, is 0 at time $t=0$. The second one $u_t(x,0)=-c\delta'(x)$ indicates us the particle, or the price of the asset, has an initial tendency to go upwards, to increase its value.

As a matter of fact, let us explain $u_t(x,0)$. We assume an initial tendency to go upwards, that is, at time $\Delta t$ we know the particle is in $\Delta x$, so we also have $u(x,\Delta t)=\delta (x-\Delta x)$. By definition,
\begin{equation*}
u_t(x,0)=\lim_{\Delta t\rightarrow0}\frac{u(x,\Delta x)-u(x,0)}{\Delta t}=\lim_{\Delta t\rightarrow0}\frac{\delta(x-\Delta x)-\delta(x)}{\Delta t},
\end{equation*}
and since we are dealing with distributions, we compute its distributional derivative:
\begin{equation*}
\begin{split}
\left\langle \frac{\delta(x-\Delta x)-\delta(x)}{\Delta t},\varphi(x)\right\rangle &=\int_{\mathbb{R}}\frac{\delta(x-\Delta x)-\delta(x)}{\Delta t}\varphi(x)dx \\
&=\frac{\varphi(\Delta x)-\varphi(0)}{\Delta t}\xrightarrow[{\Delta t\rightarrow0}]{\Delta x=c\Delta t}c\varphi '(0)=:\left\langle -c\delta',\varphi \right\rangle,
\end{split}
\end{equation*}
for all $\varphi\in\mathcal{C}^\infty_0(\mathbb{R})$ and so we shall have $u_t(x,0)=-c\delta'(x)$.
\begin{remark}
This is a well-known price evolution model thoroughly studied in \cite{Kol}. Here we present an alternative construction of the solution using the results we have found for the damped wave equation with general distributions, in this case $(f,g)=(\delta,-c\delta')$, as its initial values.
\end{remark}
In order to solve problem (\ref{ap2,d,d'}), we can use the formula we found out for the complete problem applying the considered initial conditions. Let us remember the general solution of the problem when the initial conditions are distributions is
\begin{equation*}
\left\langle u,\varphi\right\rangle :=e^{-\frac{k}{2}t}\left( \left\langle f,\left( \psi_t+\frac{k}{2}\psi\right) \ast\varphi \right\rangle +\left\langle g,\psi\ast\varphi \right\rangle\right).
\end{equation*}
In our case, we have $f=\delta$ and $g=-c\delta'$ and so we have the following
\begin{theorem}
The solution to the asset price problem (\ref{ap2,d,d'}) is a probability density function and it is given by
\begin{equation*}
\begin{split}
u(x,t)=e^{\frac{-k}{2}t}\delta(x-ct) 
+e^{\frac{-k}{2}t}\left( \alpha(x+ct)\frac{I_0'(2\alpha\sqrt{\lambda_0})}{\sqrt{\lambda_0}}+\frac{k}{4c}I_0(2\alpha\sqrt{\lambda_0})\right) \mathcal{X}_{(-ct,ct)}(x),
\end{split}
\end{equation*}
where $\lambda_0=c^2t^2-x^2$.
\end{theorem}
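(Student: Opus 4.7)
My plan is to apply the closed-form expression from Remark \ref{exs,uniq,f,g,u} to the data $f=\delta$ and $g=-c\delta'$. Since distributional convolution with $\delta$ is the identity and with $\delta'$ is the spatial derivative, that formula collapses to
\begin{equation*}
u(x,t)=e^{-\frac{k}{2}t}\left\{\psi_t(x,t)+\tfrac{k}{2}\psi(x,t)-c\,\psi_x(x,t)\right\},
\end{equation*}
understood in $\mathcal{D}'(\mathbb{R})$ for each $t>0$. Everything therefore reduces to (i) computing $\psi_x$ as a distribution and assembling the three summands, and (ii) checking that the resulting $u$ is a probability density.

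For step (i), the explicit expression for $\psi_t$ is given by Theorem \ref{exs,psi}. For $\psi_x$ I would observe that $\psi(\cdot,t)$ is smooth on $(-ct,ct)$ with boundary value $\tfrac{1}{2c}$ at $x=\pm ct$ (since $I_0(0)=1$) and vanishes outside $[-ct,ct]$, so its distributional derivative splits as the interior classical derivative plus jump deltas,
\begin{equation*}
\psi_x=-\frac{\alpha x}{c\sqrt{\lambda_0}}\,I_0'(2\alpha\sqrt{\lambda_0})\,\mathcal{X}_{(-ct,ct)}(x)+\frac{1}{2c}\left[\delta(x+ct)-\delta(x-ct)\right].
\end{equation*}
Substituting into the formula above and using Theorem \ref{exs,psi}, the two $\delta(x+ct)$ contributions cancel, the two $\delta(x-ct)$ contributions add to a single $\delta(x-ct)$, and the interior coefficients $\alpha ct$ (from $\psi_t$) and $\alpha x$ (from $-c\psi_x$) combine into $\alpha(x+ct)$. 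Adding $\tfrac{k}{2}\psi$ contributes the $\tfrac{k}{4c}I_0(2\alpha\sqrt{\lambda_0})$ term, reproducing the stated formula exactly.

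For step (ii), nonnegativity is immediate: on $(-ct,ct)$ one has $x+ct>0$, and $I_0,I_0'>0$ on $(0,\infty)$, so both summands inside the parentheses are nonnegative, as is the singular part $e^{-kt/2}\delta(x-ct)$. For total mass, rather than evaluating the interior integral head-on, I would set $M(t):=\int_{\mathbb{R}}u(x,t)\,dx$, which is well defined because $u(\cdot,t)$ is supported in the compact set $[-ct,ct]$, and test the PDE against a smooth cutoff approximating the constant $1$. Since $\int u_{xx}\,dx=0$ by compact support, $M$ satisfies $M''+kM'=0$, and the initial data give $M(0)=\langle\delta,1\rangle=1$ and $M'(0)=\langle -c\delta',1\rangle=0$, forcing $M\equiv 1$.

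The main obstacle is the bookkeeping in step (i): correctly tracking the jump contributions from differentiating $\mathcal{X}_{[-ct,ct]}$ so that exactly the right cancellations occur between $\psi_t$ and $-c\psi_x$. After that, both the closed form and the probability-density claim follow essentially mechanically.
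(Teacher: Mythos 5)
Your proposal is correct and follows essentially the same route as the paper: substitute $(f,g)=(\delta,-c\delta')$ into the general formula of Remark \ref{exs,uniq,f,g,u}, reducing everything to $e^{-kt/2}\bigl(\psi_t+\tfrac{k}{2}\psi-c\psi_x\bigr)$, and your bookkeeping of the jump terms in $\psi_x$ and the resulting cancellations is accurate. You actually supply more detail than the paper does, since the paper only states the substitution and omits both the explicit computation and any verification of the probability-density claim; your nonnegativity observation together with the ODE $M''+kM'=0$ for the total mass (justified by the compact support of $u(\cdot,t)$) is a clean way to establish that part.
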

\begin{remark}
Let us notice that this probability density function is of a mixed type: it has a discrete part governed by the Dirac delta $\delta(x-ct)$ and a continuous part supported in the interval $[-ct,ct]$.
\end{remark}
\subsection*{Acknowledgements}
The author wishes to thank Prof. J.Sol\`{a}-Morales, who was the advisor of his Bachelor's Degree Thesis \cite{Nua} and of the definitive form of the present manuscript. The author also thanks Prof. X. Cabr\'{e} for his valuable comments and suggestions.
This work has been partially supported by the Collaboration Grants Program of the Ministerio de Educaci\'{o}n y Formaci\'{o}n Profesional, Spain (2019-2020)

\end{document}